\newcommand{\BLambda}{{B\Lambda}}
\newcommand{\id}{\text{id}}
\newcommand{\ini}{\text{in}}
\newcommand{\I}{\mathcal{I}}
\def\frk{\mathfrak}               
\def\Phi{{\frk n}}
\def\Phi{{\frk N}}
\def\x{{\mathbf x}}
\def\A{{\mathcal A}}
\def\I{{\mathcal I}}
\def\opn#1#2{\def#1{\operatorname{#2}}} 
\opn\chara{char} \opn\length{\ell} \opn\pd{pd} \opn\rk{rk}
\opn\projdim{proj\,dim} \opn\injdim{inj\,dim} \opn\rank{rank}
\opn\depth{depth} \opn\grade{grade} \opn\height{height}
\opn\embdim{emb\,dim} \opn\codim{codim}
\opn\Tr{Tr} \opn\bigrank{big\,rank}
\opn\superheight{superheight}\opn\lcm{lcm}
\opn\trdeg{tr\,deg}
\opn\reg{reg} \opn\lreg{lreg} \opn\ini{in} \opn\lpd{lpd}
\opn\size{size} \opn\sdepth{sdepth}
\opn\link{link}\opn\fdepth{fdepth}\opn\lex{lex}
\opn\LM{LM}
\opn\LC{LC}
\opn\NF{NF}
\opn\Merge{Merge}
\opn\sgn{sgn}
\opn\div{div} \opn\Div{Div} \opn\cl{cl} \opn\Pic{Pic}
\opn\Prin{Prin}
\opn\op{op}
\opn\indeg{indeg} \opn\outdeg{outdeg}
\opn\red{red}
\opn\Spec{Spec} \opn\Supp{Supp} \opn\supp{supp} \opn\Sing{Sing}
\opn\Ass{Ass} \opn\Min{Min}\opn\Mon{Mon} \opn\val{val}
\opn\Ann{Ann} \opn\Rad{Rad} \opn\Soc{Soc}
 \opn\Ker{Ker} \opn\Coker{Coker} \opn\Am{Am}
\opn\Hom{Hom} \opn\Tor{Tor} \opn\Ext{Ext} \opn\End{End}
\opn\Aut{Aut} \opn\id{id}
\opn\nat{nat}
\opn\pff{pf}
\opn\Pf{Pf} \opn\GL{GL} \opn\SL{SL} \opn\mod{mod} \opn\ord{ord}
\opn\Gin{Gin} \opn\Hilb{Hilb}\opn\sort{sort}
\opn\span{span}
\opn\Image{Image}
\opn\vol{Vol}
\opn\aff{aff} \opn\con{conv} \opn\relint{relint} \opn\st{st}
\opn\lk{lk} \opn\cn{cn} \opn\core{core} \opn\vol{vol}
\opn\link{link} \opn\star{star}\opn\lex{lex}\opn\set{set}
\opn\dist{dist}
\opn\gr{gr}
\def\pot#1#2{#1[\kern-0.28ex[#2]\kern-0.28ex]}
\opn\dirlim{\underrightarrow{\lim}}
\opn\inivlim{\underleftarrow{\lim}}
\let\to=\rightarrow
\def\Implies{\ifmmode\Longrightarrow \else
        \unskip${}\Longrightarrow{}$\ignorespaces\fi}
\def\implies{\ifmmode\Rightarrow \else
        \unskip${}\Rightarrow{}$\ignorespaces\fi}
\def\iff{\ifmmode\Longleftrightarrow \else
        \unskip${}\Longleftrightarrow{}$\ignorespaces\fi}
\newtheorem{Theorem}{Theorem}[section]
\newtheorem{Lemma}[Theorem]{Lemma}
\newtheorem{Corollary}[Theorem]{Corollary}
\newtheorem{Proposition}[Theorem]{Proposition}
\theoremstyle{remark}
\newtheorem{Remark}[Theorem]{Remark}
\theoremstyle{definition}
\newtheorem{Example}[Theorem]{Example}
\newtheorem{Definition}[Theorem]{Definition}
\DeclareMathOperator{\Gr}{Gr}
\let\kappa=\varkappa
\def\qed{\ifhmode\textqed\fi
      \ifmmode\ifinner\quad\qedsymbol\else\dispqed\fi\fi}
\def\textqed{\unskip\nobreak\penalty50
       \hskip2em\hbox{}\nobreak\hfil\qedsymbol
       \parfillskip=0pt \finalhyphendemerits=0}
\def\dispqed{\rlap{\qquad\qedsymbol}}
\opn\dis{dis}
\def\pnt{{\raise0.5mm\hbox{\large\bf.}}}
\opn\Lex{Lex}
\opn\syz{{\rm syz}}
\opn\spoly{{\rm spoly}}
\opn\LM{{\rm LM}}
\opn\lm{{\rm lm}}
\opn\lcm{{\rm lcm}} \opn\A{\mathcal A}
\numberwithin{equation}{section}
\begin{document}

\title{Toric degenerations of Grassmannians \\ from matching fields}
\author{Fatemeh Mohammadi and Kristin Shaw}

\maketitle

\tikzstyle{Cgray}=[scale = .45,circle, fill = gray, minimum size=3mm] \tikzstyle{Cblack}=[scale = .7,circle, fill = black, minimum size=3mm]
\tikzstyle{Cblue}=[scale = .5,circle, fill = blue, inner sep = 0pt, minimum
size=3mm]
\tikzstyle{C1}=[scale = .7,circle, fill = black!0, inner sep = 0pt, minimum
size=3mm]

\tikzstyle{test2}=[scale = 1.5,circle, fill = black!0, inner sep = 0pt, minimum
size=3mm]
\tikzstyle{Cwhite}=[scale = .45,circle, fill = white, minimum size=3mm] 
\tikzstyle{Cblack2}=[scale = .3,circle, fill = black, minimum size=3mm] 
\tikzstyle{Cblack}=[scale = .3,circle, fill = black, minimum size=3mm]
\tikzstyle{C0}=[scale = .9,circle, fill = black!0, inner sep = 0pt, minimum size=3mm]
\tikzstyle{C1}=[scale = .7,circle, fill = black!0, inner sep = 0pt, minimum size=3mm]
\tikzstyle{Cred}=[scale = .4,circle, fill = red, minimum size=3mm] 
\tikzstyle{Cblack3}=[scale = .4,circle, fill = black, minimum size=3mm]

\abstract{
We study the algebraic combinatorics of monomial degenerations of Pl\"ucker forms which is governed by matching fields in the sense of Sturmfels and Zelevinsky. 
We provide a necessary condition for a matching field to yield a Khovanskii basis of the Pl\"ucker algebra for $3$-planes in $n$-space. When the ideal associated to the matching field is quadratically generated this condition is both necessary and sufficient. 
Finally, we describe a family of matching fields, called $2$-block diagonal, whose ideals are quadratically generated. These matching fields produce 
a new family of  toric degenerations of $\Gr(3, n)$.}

\section{Introduction}
In this note we offer a new family of  toric degenerations 
of $\Gr(3, n)$ arising from monomial degenerations of the Pl\"ucker forms. 
Toric degenerations provide a useful tool to study algebraic varieties. This is mainly  because toric geometry is inextricably linked to the theory of polytopes and polyhedral fans. 
Combinatorial invariants of polytopes provide geometric information about toric varieties, and many of these invariants 
are preserved under degeneration. Here, a toric degeneration is a Gr\"obner degeneration such that the corresponding initial ideal is binomial and prime, see Definition \ref{def:toric}.

For general Grassmannians  and flag varieties there  are prototypic examples of toric degenerations which are related  to  young tableaux, Gelfand-Cetlin integrable systems, and their polytopes \cite{ FvectorGC, KOGAN}.
In the case of the   Grassmannian $\Gr(2,n)$,  there are many other toric degenerations generalising this primary example. Namely, any  trivalent tree with $n$ number of labelled leaves gives rise to a toric degeneration of $\Gr(2, n)$.
The toric variety is governed by the isomorphism type of the trivalent tree \cite{SpeyerSturmfels, Witaszek}.
These degenerations are related to bending systems on polygon spaces and  integrable systems \cite{Kapovich, NISHINOU}.

The Gelfand-Cetlin degenerations arise from  monomial initial degenerations of the Pl\"ucker forms. In fact, these degenerations arise from the theory of Khovanskii bases, which are also called SAGBI bases,  see Definition \ref{def:Khovanskii}. 
The leading term of each Pl\"ucker form in this case  is the monomial of the determinant corresponding to the identity permutation.  The new degenerations of $\Gr(3, n)$ provided here  depend only on the underlying \emph{coherent matching field}. A matching field is a choice of permutation for each  Pl\"ucker form, see Definitions \ref{def:matching} and \ref{def:coherent}. A coherent matching field provides a monomial degeneration of the Pl\"ucker forms and are therefore candidates for Khovanskii bases.

To a general matching field, we associate a toric ideal in the polynomial ring $\mathbb{K}[x_{ij}]$ where $\mathbb{K}$ is a field.  These matching field ideals are most conveniently represented by matching field tableaux, which we also introduce here. These tableaux generalize Young tableaux which are usually strictly increasing in the columns. Columns of a matching field tableau are filled according to the permutation chosen by the matching field. 
A matching field ideal is the kernel of a monomial map, and hence toric, see Equation $(2.2)$.  Moreover, it is generated by binomials which come from  pairs of matching field tableaux whose contents are row-wise equal. 
The Pl\"ucker forms are a Khovanskii basis of the Pl\"ucker algebra with respect to a matching field if and only if its matching field ideal is equal to the initial ideal of the corresponding degeneration of the Pl\"ucker ideal, see Theorem \ref{Theorem:Stu} or \cite[Theorem  11.4]{sturmfels1996grobner}. Therefore, obtaining a Khovanskii basis from a matching field is equivalent  to obtaining a toric degeneration of the Grassmannian.  
 
From a  weight matrix  that produces  a monomial degeneration of the Pl\"ucker forms, we can produce a tropical hyperplane arrangement, and the matching field can be determined from this geometric picture  \cite{fink2015stiefel}.
Using the  associated tropical hyperplane arrangement, we introduce the notion of hexagonal matching fields of size $3 \times 6$ and non-hexagonal matching fields, see Definition \ref{def:hex}. This leads to our first theorem. 

\begin{Theorem}\label{Theorem:necess}
If a $3 \times n$ matching field produces a toric degeneration of $\Gr(3,n)$, then it is non-hexgonal. 
\end{Theorem}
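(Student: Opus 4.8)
The plan is to prove the contrapositive in its sharp form: a hexagonal matching field does not give a Khovanskii basis, hence not a toric degeneration. Throughout, write $I\subseteq \mathbb{K}[p_J]$ for the Pl\"ucker ideal of $\Gr(3,n)$, let $w$ be a weight matrix inducing the coherent matching field $\Lambda$, and let $I_\Lambda$ be its matching field ideal. By Theorem~\ref{Theorem:Stu}, $\Lambda$ yields a toric degeneration if and only if $I_\Lambda=\operatorname{in}_w(I)$. One inclusion is automatic: each generator $m_J=\operatorname{in}_w(p_J)$ of $I_\Lambda$ is the leading monomial of the corresponding Pl\"ucker form, so every binomial relation among the $m_J$ also holds in the initial algebra, giving $I_\Lambda\subseteq \operatorname{in}_w(I)$. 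Thus it suffices to exhibit an element $g\in \operatorname{in}_w(I)\setminus I_\Lambda$. Two properties of $I_\Lambda$ will be used repeatedly: being toric it is prime and binomial, hence contains no monomial; and each of its binomial generators corresponds to a pair of matching field tableaux whose rows carry equal content (Equation~(2.2)).

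Assume now that $\Lambda$ is hexagonal, so by Definition~\ref{def:hex} there are six columns whose induced tropical hyperplane arrangement contains a hexagonal $2$-cell. The first step is to read off from this cell a distinguished three-term Pl\"ucker relation $f=p_Ap_B-p_Cp_D+p_Ep_F\in I$, where the six Pl\"ucker coordinates $p_A,\dots,p_F$ label the six vertices of the hexagon and the three products correspond to the three pairs of opposite vertices. Since $f$ is supported on the six relevant columns, it is an honest element of the full Pl\"ucker ideal of $\Gr(3,n)$, so no separate restriction lemma relating $\Gr(3,6)$ to $\Gr(3,n)$ is required. I would then compute $\operatorname{in}_w(f)$ directly from $\Lambda$: the permutation that the matching field assigns at each index fixes the leading monomial, and hence the $w$-weight, of each of the three products $p_Ap_B$, $p_Cp_D$, $p_Ep_F$.

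The heart of the argument, and the main obstacle, is the translation of the hexagonal geometry into a statement about these three weights. Using the dictionary between $w$ and its tropical hyperplane arrangement from \cite{fink2015stiefel}, I would show that the presence of a hexagonal cell forces the tropicalized three-term relation attached to $f$ to be \emph{unbalanced}: the minimum of $w(A)+w(B)$, $w(C)+w(D)$, $w(E)+w(F)$ is attained at a single term. Geometrically, the closing-up of the six edges of the hexagon is exactly what obstructs the two coincidences that a doubly-attained minimum would require. Granting this, $\operatorname{in}_w(f)$ is a single monomial $p_Xp_Y$ lying in $\operatorname{in}_w(I)$; as $I_\Lambda$ is prime and binomial it contains no monomial, so $g=p_Xp_Y$ witnesses $I_\Lambda\subsetneq\operatorname{in}_w(I)$, and by Theorem~\ref{Theorem:Stu} the degeneration is not toric.

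Should the analysis instead leave the minimum attained twice in some borderline hexagon, I expect the same geometric bookkeeping to show that the two surviving terms come from matching field tableaux that are \emph{not} row-wise equal; then $\operatorname{in}_w(f)$ is a binomial outside the generating set of $I_\Lambda$, again yielding $I_\Lambda\subsetneq\operatorname{in}_w(I)$. I anticipate that essentially all of the work concentrates in this combinatorial-geometric step: correctly identifying the relation $f$ produced by the hexagon, and tracking the matching field permutations together with the determinantal signs carefully enough to certify that no row-wise equality can hold among the minimal terms. Once this is in place, the conclusion that a hexagonal matching field cannot produce a toric degeneration of $\Gr(3,n)$, and hence that a toric matching field must be non-hexagonal, follows formally.
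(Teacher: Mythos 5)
Your proposal runs in the wrong direction, and the error is fatal to the whole strategy. The automatic containment between the two ideals is $\ini_{w_M}(\I_{3,n}) \subseteq \I_{\Lambda}$, not $\I_{\Lambda} \subseteq \ini_{w_M}(\I_{3,n})$ as you claim. Indeed, for $f \in \I_{3,n}$ we have $\psi(f)=0$; taking the part of lowest $M$-weight of this identity (which is well defined because every $\ini_M(P_I)$ is a monomial) gives $\phi_{\Lambda}(\ini_{w_M}(f))=0$, i.e.\ $\ini_{w_M}(f)\in\ker\phi_{\Lambda}=\I_{\Lambda}$. This is exactly the easy inclusion behind Theorem \ref{Theorem:Stu}. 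The failure of the Khovanskii property in the hexagonal case is that $\I_{\Lambda}$ is too \emph{large}, not too small: the paper exhibits $P_{523}P_{416}-P_{526}P_{413}\in\I_{\Lambda}\setminus\ini_{w_M}(\I_{3,6})$ for the hexagonal matching field of Example \ref{exam:hexagon}, which directly contradicts your claimed inclusion. Consequently the witness $g\in\ini_{w_M}(\I_{3,n})\setminus\I_{\Lambda}$ that you set out to construct cannot exist.

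The same reversal destroys the ``heart'' of your argument. Since every initial form $\ini_{w_M}(f)$ of a Pl\"ucker relation lies in the toric ideal $\I_{\Lambda}$, and a toric ideal (the kernel of a monomial map) contains no monomials, the minimum of the three weights in your tropicalized three-term relation can never be attained at a single term: if it were, applying $\phi_{\Lambda}$ to the surviving term would give a nonzero monomial in $\mathbb{K}[x_{ij}]$, contradicting $\phi_{\Lambda}(\ini_{w_M}(f))=0$. Your fallback case fails for the same reason: whatever terms survive must cancel under $\phi_{\Lambda}$, and that cancellation is precisely the statement that their tableaux are row-wise equal, so the surviving binomial lies in $\I_{\Lambda}$ after all. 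The paper's actual proof goes the opposite way: a hexagonal $(2,2,2)$-cell forces \emph{extra} quadratic binomials into $\I_{\Lambda}$ beyond those of Lemma \ref{lem:222} (one for each pair of edges of the dual hexagon, see Proposition \ref{prop:deg2hex}), so the $e_J$-multigraded piece of $\mathbb{K}[P_I]/\I_{\Lambda}$ has dimension at most $4$, whereas the corresponding piece of $\mathbb{K}[P_I]/\ini_{w_M}(\I_{3,n})$ has dimension $5$, the number of semi-standard tableaux of size $3\times 2$, because Gr\"obner degeneration preserves the Hilbert function. The two ideals therefore differ, and Theorem \ref{Theorem:Stu} rules out a toric degeneration. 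If you want to keep your tropical picture, the correct reading of the hexagon is that its dual edges produce additional covector pairs, hence additional relations in $\I_{\Lambda}$; they never produce elements of $\ini_{w_M}(\I_{3,n})$ outside $\I_{\Lambda}$.
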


We also define  submatching fields by using natural maps between Grassmannians of different sizes, see Definition \ref{def:submatching}.
This allows us to extend Theorem \ref{Theorem:necess} to higher Grassmannians. 
\begin{Theorem}\label{Theorem:subhex}
A $k \times n$ matching field that has a hexagonal submatching field does not produce a toric degeneration of the Grassmannian $\Gr(k, n)$. 
\end{Theorem}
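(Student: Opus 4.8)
The plan is to deduce Theorem \ref{Theorem:subhex} from Theorem \ref{Theorem:necess} by establishing that the property of producing a toric degeneration is inherited by submatching fields. Concretely, I would argue by contradiction: suppose a $k \times n$ matching field $\Lambda$ has a hexagonal submatching field $\Lambda'$, yet $\Lambda$ produces a toric degeneration of $\Gr(k,n)$. Since $\Lambda'$ is a $3 \times 6$ matching field which is hexagonal, Theorem \ref{Theorem:necess} says that $\Lambda'$ does not produce a toric degeneration of $\Gr(3,6)$. The whole argument therefore reduces to the heredity statement: if $\Lambda$ produces a toric degeneration, then so does every submatching field of $\Lambda$. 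Applying this to the hexagonal submatching field $\Lambda'$ contradicts Theorem \ref{Theorem:necess}, forcing $\Lambda$ not to produce a toric degeneration.

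To prove the heredity statement I would work on the side of the Pl\"ucker algebra and Khovanskii bases rather than with initial ideals, using Theorem \ref{Theorem:Stu} to translate ``produces a toric degeneration'' into ``the Pl\"ucker forms are a Khovanskii basis.'' Let $w$ be a weight matrix inducing $\Lambda$, so that $\ini_w(p_J) = \prod_i x_{i,\sigma_J(i)}$ is a monomial whose set of column indices is exactly $J$, since the matching field assigns to $J$ an ordering of its own elements. Since submatching fields (Definition \ref{def:submatching}) are built from the two natural minor-type maps between Grassmannians of different sizes --- deletion, giving the subgrassmannian $\Gr(k,S)$ for $S \subseteq [n]$, and contraction, giving the subgrassmannian of $k$-planes containing a fixed coordinate line $\langle e_a\rangle$ --- it suffices to prove heredity for each elementary step and then compose.

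For deletion, let $A$ be the Pl\"ucker algebra and let $A_S = \mathbb{K}[p_J : J \subseteq S]$, which is the Pl\"ucker algebra of $\Gr(k,S)$. I want to show $\ini_w(A_S) = \mathbb{K}[\ini_w(p_J) : J \subseteq S]$. The inclusion $\supseteq$ is immediate. For $\subseteq$, take $g \in A_S$; written as a polynomial in the minors $p_J$ with $J \subseteq S$, it lies in $\mathbb{K}[x_{ij} : j \in S]$, so $\ini_w(g)$ does too, while the Khovanskii hypothesis for $\Lambda$ gives $\ini_w(g) \in \mathbb{K}[\ini_w(p_J) : \text{all } J]$. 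The key point is that in a product of the monomials $\ini_w(p_J)$ the exponents only add, and the column support of each $\ini_w(p_J)$ is exactly $J$; hence such a product lies in $\mathbb{K}[x_{ij}: j\in S]$ only if every generator used has $J \subseteq S$. This forces $\ini_w(g) \in \mathbb{K}[\ini_w(p_J): J\subseteq S]$. For contraction along $a$, where $\Lambda$ places $a$ in a fixed row $r$ so that $\ini_w(p_{J'\cup a}) = x_{r,a}\, m_{J'}$ with $m_{J'}$ the leading monomial of the contracted matching field, the analogous argument uses the grading by total degree in the column-$a$ variables $\{x_{i,a}\}_{i}$: every minor $p_{J'\cup a}$ is of degree one in these variables, so a generator $\ini_w(p_J)$ contributes to this degree exactly when $a \in J$, and matching this grading against the Pl\"ucker degree on the subalgebra generated by the $p_{J'\cup a}$ forces any monomial expression for $\ini_w(g)$ to use only generators with $a \in J$. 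After dividing out the common factor $x_{r,a}$, this produces a Khovanskii basis for the contracted matching field, that is, a toric degeneration of the smaller Grassmannian.

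The main obstacle is precisely this inheritance step, because Khovanskii (SAGBI) bases are not in general preserved when one passes to the subalgebra generated by a subset of the generators; the argument succeeds here only because of the rigid combinatorial structure of matching-field leading terms, namely that $\ini_w(p_J)$ is supported exactly on the columns $J$ and is of degree one in the variables of each column. I would also carry out the routine but necessary check that a submatching field of a coherent matching field is again coherent, with inducing weight obtained by restricting $w$ (deletion) or by deleting the row $r$ and the $x_{r,a}$-contribution (contraction); this guarantees that the degenerations being compared are governed by the same weight, so that Theorem \ref{Theorem:Stu} applies at every stage of the composition.
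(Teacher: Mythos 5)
Your proposal is correct in substance, but it takes a genuinely different route from the paper. The paper never formulates (or needs) your heredity principle: its proof of Theorem~\ref{Theorem:subhex} is a direct Hilbert function comparison in a single multidegree. Since $\Lambda$ is pointed on $S$, every variable $P_I$ with $S\subset I\subset T$ has its $S$-entries in fixed rows, so two monomials $P_IP_J$ of multidegree $2e_S+e_{T\setminus S}$ (i.e.\ $I\cap J=S$ and $I\cup J=T$) are identified modulo $\I_\Lambda$ exactly when the corresponding restricted $3\times 2$ tableaux of the hexagonal submatching field are row-wise equal; by the relations of Example~\ref{exam:hexagon} the ten such monomials fall into only four classes, whereas the same multigraded piece of $\mathcal{A}_{k,n}$ is $5$-dimensional. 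As Gr\"obner degenerations preserve graded Hilbert functions, this forces $\I_\Lambda\neq\ini_{w_M}(\I_{k,n})$, and Theorem~\ref{Theorem:Stu} concludes. Your argument instead reduces to Theorem~\ref{Theorem:necess} via inheritance of the Khovanskii property under deletion and contraction. This is legitimate (there is no circularity: Theorem~\ref{Theorem:necess} rests on Proposition~\ref{prop:deg2hex}, not on Theorem~\ref{Theorem:subhex}), and it proves a stronger statement of independent interest --- every submatching field of a matching field producing a toric degeneration again produces one --- at the cost of more SAGBI-theoretic work; the paper's proof is shorter because it localizes the hexagon obstruction inside $\I_\Lambda$ directly, essentially rerunning Proposition~\ref{prop:deg2hex} in the ambient algebra.

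The one place where your write-up is compressed is the final transfer in the contraction step, and you should be aware it needs an extra argument (which does go through). Your grading argument correctly shows that the minors $p_{J'\cup a}$ form a Khovanskii basis of the subalgebra $A_a\subset\mathbb{K}[x_{ij}]$ they generate; but the desired conclusion concerns the Pl\"ucker algebra of $\Gr(k-1,n-1)$ inside its own polynomial ring, and a general element $g'$ of that algebra is not literally an element of $A_a$ divided by a power of $x_{ra}$. To finish: write $g'$ as a degree-$d$ polynomial in the small minors, lift it to $g\in A_a$ by substituting $\pm p_{J'\cup a}$ for each small minor, and use the finer $\mathbb{Z}^k$-grading by the individual column-$a$ variables $x_{1a},\dots,x_{ka}$. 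The component of $g$ in multidegree $d\,e_r$ equals $\pm x_{ra}^d\,g'$, and every term of $\ini_w(g)$ lies in that multidegree (by your step, $\ini_w(g)$ is a combination of products of the monomials $x_{ra}m_{J'}$, which are pure in $x_{ra}$ among the column-$a$ variables); hence $\ini_w(g)=\pm x_{ra}^d\,\ini_{w'}(g')$, and dividing by $x_{ra}^d$ gives $\ini_{w'}(g')\in\mathbb{K}[m_{J'}]$ as needed. With this lemma spelled out, together with the coherence checks you already mention, your reduction to Theorem~\ref{Theorem:necess} is a complete and correct alternative proof.
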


If a $3 \times n$ matching field ideal is quadratically generated, then the necessary  condition from Theorem \ref{Theorem:necess} is also sufficient.   

\begin{Theorem}\label{Theorem:iff}
A $3 \times n$ matching field whose ideal is quadratically generated provides a toric degeneration of $\Gr(3, n)$ if and only if it is non-hexagonal. 
\end{Theorem}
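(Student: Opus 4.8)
The plan is to prove the ``if'' direction, since the ``only if'' direction is exactly Theorem~\ref{Theorem:necess}: a matching field giving a toric degeneration must be non-hexagonal. So let $\Lambda$ be a non-hexagonal $3\times n$ matching field, induced by a weight matrix $\mathbf w$, and suppose its ideal $J_\Lambda$ is quadratically generated. Write $P_I$ for the Pl\"ucker form indexed by a $3$-subset $I$, $m_I=\operatorname{in}_{\mathbf w}(P_I)$ for the monomial selected by $\Lambda$, and $p_I$ for the corresponding variable; thus $I_{3,n}=\ker(p_I\mapsto P_I)$ and $J_\Lambda=\ker(p_I\mapsto m_I)$, and $\operatorname{in}_{\mathbf w}(I_{3,n})$ is formed with respect to the induced weight $p_I\mapsto \mathbf w(m_I)$. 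By Definition~\ref{def:toric} and Theorem~\ref{Theorem:Stu}, it suffices to prove the single equality $J_\Lambda=\operatorname{in}_{\mathbf w}(I_{3,n})$, for then the initial ideal is binomial and prime (being toric), which is both a toric degeneration and a Khovanskii basis.

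One inclusion is automatic for any coherent matching field: $\operatorname{in}_{\mathbf w}(I_{3,n})\subseteq J_\Lambda$. Indeed, if $g\in I_{3,n}$ then $g(P_\bullet)=0$ in $\mathbb{K}[x_{ij}]$, so the part of $g(P_\bullet)$ of minimal $\mathbf w$-weight also vanishes. Since $\operatorname{in}_{\mathbf w}$ is multiplicative on products of Pl\"ucker forms, that minimal-weight part is precisely the image of $\operatorname{in}_{\mathbf w}(g)$ under $p_I\mapsto m_I$; hence $\operatorname{in}_{\mathbf w}(g)\in J_\Lambda$. The real content is therefore the reverse inclusion, which I would establish first in degree two and then lift to all degrees using quadratic generation.

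In $p$-degree two, the automatic inclusion gives $\operatorname{in}_{\mathbf w}(I_{3,n})_2\subseteq (J_\Lambda)_2$, and since initial degenerations preserve Hilbert functions, $\dim_{\mathbb K}\operatorname{in}_{\mathbf w}(I_{3,n})_2=\dim_{\mathbb K}(I_{3,n})_2$, the dimension of the span of the quadratic Grassmann--Pl\"ucker relations. The key point is to show that non-hexagonality forces $\dim_{\mathbb K}(J_\Lambda)_2\le \dim_{\mathbb K}(I_{3,n})_2$, i.e. that $J_\Lambda$ carries no \emph{surplus} quadrics. For a quadratic Grassmann--Pl\"ucker relation $R$ (with at most four terms), two of its monomials $p_Ip_J$ and $p_Kp_L$ tie for minimal $\mathbf w$-weight exactly when they have equal content, that is, map to the same monomial under $p_I\mapsto m_I$; I would show that the non-hexagonal hypothesis of Definition~\ref{def:hex} guarantees that precisely two terms of $R$ attain this minimum, so that $\operatorname{in}_{\mathbf w}(R)$ is a genuine binomial lying in $(J_\Lambda)_2$, and moreover that these binomials account for every content-equal relation, leaving no further independent quadrics. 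Granting this count, the chain $\dim_{\mathbb K}(I_{3,n})_2=\dim_{\mathbb K}\operatorname{in}_{\mathbf w}(I_{3,n})_2\le \dim_{\mathbb K}(J_\Lambda)_2\le \dim_{\mathbb K}(I_{3,n})_2$ collapses to equalities, whence $\operatorname{in}_{\mathbf w}(I_{3,n})_2=(J_\Lambda)_2$. Finally, because $J_\Lambda$ is quadratically generated, $J_\Lambda=\big((J_\Lambda)_2\big)=\big(\operatorname{in}_{\mathbf w}(I_{3,n})_2\big)\subseteq \operatorname{in}_{\mathbf w}(I_{3,n})$, and combined with the automatic inclusion this yields $J_\Lambda=\operatorname{in}_{\mathbf w}(I_{3,n})$, as required.

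I expect the main obstacle to be exactly the degree-two count governed by non-hexagonality: one must verify that the hexagonal configurations are the \emph{only} way a Grassmann--Pl\"ucker relation can degenerate so that its initial form fails to be the expected content-equal binomial, equivalently the only source of extra coincidences among the monomials $m_I$ that would produce surplus quadrics in $J_\Lambda$. Since every quadratic relation for $\Gr(3,n)$ involves at most six columns, this is a finite case analysis that localizes to the $3\times 6$ setting in which the hexagon of Definition~\ref{def:hex} is defined, read off from the associated tropical hyperplane arrangement; a subsidiary point to confirm is that each content-equal pair genuinely co-occurs in a single relation (or is spanned by such), so that no quadric of $J_\Lambda$ is missed.
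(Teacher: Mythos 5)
Your proposal is correct and takes essentially the same route as the paper: the degree-two count you flag as the main obstacle is precisely the paper's Proposition \ref{prop:deg2hex} ($(\I_\Lambda)_2 = (\ini_{w_M}(\I_{3,n}))_2$ if and only if $\Lambda$ is non-hexagonal), which the paper proves by the covector/dual-subdivision case analysis you anticipate, plus a reduction of the repeated-index multidegrees $e_J + e_i - e_j$, where hexagons play no role, to $\Gr(2,5)$ via \cite{SpeyerSturmfels}. The paper then concludes as you do: quadratic generation places every generator of $\I_\Lambda$ in $(\ini_{w_M}(\I_{3,n}))_2$, and Theorem \ref{Theorem:Stu} together with the subduction algorithm of \cite[Corollary 11.5]{sturmfels1996grobner} yields the Khovanskii basis and hence the toric degeneration, where you instead use the automatic inclusion $\ini_{w_M}(\I_{3,n}) \subseteq \I_\Lambda$ and a Hilbert function count.
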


Describing a generating set of toric ideals is a well-studied and difficult problem. In particular, proving that an ideal is quadratically generated is quite a difficult task. There are some combinatorial criteria for the toric ideals arising from graphs, matroids and simplicial complexes to be generated by quadratics, see e.g. \cite{White, Ohsugi, Cone, Mateusz}. Such a criterion guarantees that the associated Koszul algebra is normal. 

Not all coherent matching field ideals are quadratically generated. The first examples that we know of are of size $3 \times 8$. 
However, we introduce a class of matching fields of size $3 \times n$, called \emph{block diagonal}, which are quadratically generated when they have $2$ blocks.

\begin{Theorem}\label{Theorem:2block}
The ideal of a $2$-block diagonal matching field  of size $3 \times n$  is quadratically generated. 
\end{Theorem}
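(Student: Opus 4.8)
The plan is to realize the matching field ideal $I$ as the toric ideal of the point configuration formed by the matching-field monomials, and then to prove quadratic generation by a fiber-connectivity argument. Note first that the monomial map sends a Pl\"ucker variable $p_{abc}$ to $x_{1,\tau_1}x_{2,\tau_2}x_{3,\tau_3}$, where $(\tau_1,\tau_2,\tau_3)$ is the prescribed filling of $\{a,b,c\}$; hence the image of a product $\prod_\ell p_{I_\ell}$ records exactly, for each row $r$, the multiset of entries appearing in row $r$. Thus two tableaux have the same image if and only if they are row-wise equal, which matches the description of the generators recalled in the excerpt. By the standard fact that a toric ideal is generated in degrees $\le d$ precisely when any two monomials in a common fiber are connected by a chain of moves each altering a submonomial of degree $\le d$, it suffices to show that any two row-equivalent tableaux $T$ and $T'$ are joined by a sequence of \emph{quadratic} exchanges, i.e.\ moves that rearrange only two Pl\"ucker factors $p_I p_J \rightsquigarrow p_{I'}p_{J'}$ at a time while preserving each row content.

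First I would introduce a normal form for tableaux adapted to the $2$-block structure, a matching-field analogue of a semistandard Young tableau, together with a sorting procedure. Given two columns $C_1, C_2$ (each an index set filled according to the matching field), a sorting step redistributes within each of the three rows the two entries between the columns so as to move the pair toward the normal form, while keeping the multiset of entries in each row fixed. The central claim is that this redistribution again yields valid matching-field columns $C_1', C_2'$, so that each sorting step is an honest quadratic binomial $p_I p_J - p_{I'}p_{J'} \in I$.

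The main obstacle is exactly this compatibility check. Within a single block the matching field restricts to the diagonal matching field, and the sorting step reduces to the classical straightening of semistandard tableaux, which is quadratic. The delicate cases are the mixed columns whose entries straddle both blocks: there the permutation prescribed by the $2$-block rule departs from the increasing order, and one must verify, by a case analysis on how the six entries of the two columns distribute among the two blocks, that the sorted pair still obeys the prescribed filling. I expect this analysis to be the crux, and to be precisely where the hypothesis of \emph{two} blocks enters: with only two blocks the possible distributions are few and each admits a valid quadratic exchange, whereas further blocks would permit the obstructing hexagonal configurations.

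Finally I would establish termination of the sorting procedure by exhibiting a strictly decreasing monovariant, for instance the number of inversions between adjacent columns or a lexicographic statistic on the tableau, so that every tableau reaches a well-defined normal form through quadratic exchanges. Since row-equivalent tableaux share all three row contents, they must sort to the same normal form; therefore their difference lies in the ideal generated by the quadratic binomials. As these binomials account for all generators of $I$, this proves that $I$ is quadratically generated.
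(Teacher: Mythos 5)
Your high-level strategy coincides with the paper's: both arguments connect two row-wise equal $\Lambda$-tableaux by quadratic exchanges, after arranging the tableaux in a semi-standard-like format and doing a case analysis on how column entries distribute between the two blocks. But as written your proposal has two genuine gaps. The first is that the case analysis you explicitly defer (``I expect this analysis to be the crux'') \emph{is} the proof: the paper's entire argument consists of exactly this verification, carried out in four cases according to whether the leading column $I$ satisfies $|I \cap I_1| = 3, 2, 1$ or $0$, with explicit swaps exhibited in each case. Moreover, to even set up that case analysis one needs to know that the leading columns of $T$ and $T'$ have the same block profile, i.e.\ $|I \cap I_1| = |I' \cap I_1|$; the paper gets this from a $\mathbb{Z}^4$-grading on the matching field ideal (Lemma \ref{lem:homo}), which counts, for each tableau, the columns meeting $I_1$ in $3$, $2$, $1$, or $0$ elements. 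Your plan contains no substitute for this homogeneity step, and without it the ``few possible distributions'' you invoke are not controlled.

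The second gap is structural. Your concluding step argues: the sorting procedure terminates, hence row-equivalent tableaux ``must sort to the same normal form.'' Termination of a rewriting system does not imply uniqueness of normal forms; for that you need confluence (say, local confluence plus Newman's lemma) or an intrinsic characterization of the normal form as a function of the three row multisets alone --- and you never define the normal form, let alone prove such a characterization. The paper avoids this issue entirely by a different bookkeeping: rather than pushing both tableaux to a global canonical form, it uses quadratic moves to make the \emph{first columns} of $T$ and $T'$ equal, cancels that common column, and inducts on the degree $d$. If you wish to keep the normal-form formulation, you must either prove that a tableau in normal form is uniquely determined by its row contents, or restructure the argument as the paper does, where only column-matching and degree induction are needed.
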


\begin{Corollary}\label{cor:blockdiag}
A $2$-block diagonal matching field produces a toric degeneration of the Grassmannian $\Gr(3, n)$. 
Equivalently, when $k= 3$ the Pl\"ucker forms are a Khovanskii basis with respect to any weight matrix arising from a  $2$-block diagonal matching field.  
\end{Corollary}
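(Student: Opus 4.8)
The plan is to obtain the corollary by combining the two preceding structural theorems. By Theorem \ref{Theorem:2block}, the ideal of any $2$-block diagonal matching field of size $3 \times n$ is quadratically generated, so the hypothesis of Theorem \ref{Theorem:iff} is satisfied. Theorem \ref{Theorem:iff} then reduces the question of whether the matching field produces a toric degeneration to a single combinatorial condition: it suffices to check that a $2$-block diagonal matching field is non-hexagonal. Thus the only content of the corollary beyond the cited theorems is this verification, together with the translation between toric degenerations and Khovanskii bases.

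To establish non-hexagonality, I would first recall the explicit permutation data defining a $2$-block diagonal matching field together with the local $3 \times 6$ pattern that Definition \ref{def:hex} designates as hexagonal. Since a hexagonal submatching field can only arise from a choice of six columns whose induced $3 \times 6$ matching field realizes the hexagonal pattern, it suffices to examine the restriction of the block diagonal data to an arbitrary $6$-subset of the columns. Because the block decomposition partitions the columns into two groups on which the chosen permutations are rigidly prescribed, I expect that any six columns split across the two blocks in a way that forces the induced tableau pattern to differ from the hexagonal one; the hexagonal obstruction demands a specific interleaving of the three rows that the rigidity of the block diagonal structure cannot produce.

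Granting non-hexagonality, the toric degeneration statement follows immediately from Theorem \ref{Theorem:iff}. The equivalence asserted in the second sentence is then exactly the correspondence recorded in Theorem \ref{Theorem:Stu} (see also \cite[Theorem 11.4]{sturmfels1996grobner}): the Pl\"ucker forms form a Khovanskii basis with respect to a weight matrix arising from the matching field precisely when the matching field ideal coincides with the initial ideal of the Pl\"ucker ideal, which is the defining condition for the degeneration to be toric. Hence both formulations of the corollary hold.

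The main obstacle I anticipate is the combinatorial bookkeeping in the second step, namely confirming that no $6$-element subset of columns yields a hexagonal submatching field. The difficulty lies in organizing the case analysis according to how the six columns are distributed between the two blocks rather than in any deep structural phenomenon, since the rigidity of the $2$-block diagonal definition should make each case routine to verify.
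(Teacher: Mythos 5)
Your overall skeleton is exactly the paper's intended derivation: the corollary is meant to follow by feeding Theorem \ref{Theorem:2block} into Theorem \ref{Theorem:iff}, with the equivalence between toric degenerations and Khovanskii bases supplied by Theorem \ref{Theorem:Stu}. That part of your proposal, including the observation that the only content beyond the two cited theorems is non-hexagonality, is correct.

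The gap is that you never actually establish non-hexagonality: at the decisive moment you write that you ``expect'' the block structure cannot produce the hexagonal pattern, which is a heuristic, not an argument --- and since Theorem \ref{Theorem:iff} is an equivalence, this is precisely the step on which the whole corollary rests. The verification can be closed cleanly along the lines you sketch, and you should do it. For a $6$-subset $J \subset [n]$ set $b_1 = |J \cap I_1|$ and $b_2 = |J \cap I_2|$. Because Definition \ref{def:blockdiagcolumns} assigns the permutation of a triple $I$ using only the cardinality $|I \cap I_1|$, the restriction $\BLambda_{a_1,a_2}|_J$ is carried, under the order-preserving bijection $J \to [6]$ (which maps $J \cap I_1$ onto an initial segment and merely relabels the tropical lines of the inducing arrangement, hence preserves hexagonality), to the $3 \times 6$ matching field $\BLambda_{b_1,b_2}$. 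Thus it suffices to check that the finitely many matching fields $\BLambda_{b_1,6-b_1}$, $0 \le b_1 \le 6$, are not hexagonal; this can be read off from the arrangements in Figure \ref{fig:diag} and the classification recorded in Figure \ref{fig:tableofpolytopes} and the paragraph following it (the bounded $2$-cells that occur are a segment, a parallelogram, a $4$-gon, or a pentagon --- never a hexagon), or verified combinatorially by checking that the ten degree-$2$ monomials of multidegree $e_J$ fall into five row-wise-equality classes, as in Example \ref{exam:diag}, rather than four, as in Example \ref{exam:hexagon}. Note also that both Definition \ref{def:hex} and the weight-matrix formulation of the corollary presuppose that $\BLambda_{a_1,a_2}$ is coherent; like the paper (start of Section \ref{sec:block}), you take this for granted, and it should at least be stated as an input.
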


Before reviewing the contents of the paper we would like to comment on some related works.  Rietsch and  Williams describe families of toric degenerations of Grassmannians arising from  plabic graphs \cite{RietschWilliams}. In \cite{Bossinger}, Bossinger et.~al.~show that already in the case of $\Gr(3, 6)$ there is a discrepancy between the toric degenerations arising on one hand from plabic graphs and on the other hand from top dimensional cones of the tropical Grassmannian. The toric degenerations studied here are a subset of the latter type coming from top dimensional cones of the tropical Grassmannian associated to Stiefel tropical linear spaces, see \cite{fink2015stiefel} for more details. In general, the combinatorial connection between matching fields and plabic graphs is still unknown. 
Kaveh and Manon provide a general connection between tropical geometry and Khovanskii bases (and hence toric degenerations) in \cite{Kaveh}. This question has been studied in \cite{bossinger2017computing} for small flag varieties. Here, we are interested in determining  when the Pl\"ucker  forms are a Khovanskii basis, equivalently when the associated initial degeneration of the Pl\"ucker ideal is toric. 
The results presented here offer a family of examples 
that fit into the  general framework of Kaveh and Manon and which are linked to combinatorics. Lastly, we would like to remark that toric degenerations of flag varieties and Schubert varieties arising from matching fields is another open direction of research at the present time. 

We finish the introduction with an outline of the paper. Section \ref{prelim} fixes notations for the Grassmannians and introduces matching fields. In Section \ref{sec:arr}, we review  the connection between matching fields and tropical hyperplane arrangements. Here we introduce the notion of hexagonal matching fields and prove Theorems \ref{Theorem:necess},  \ref{Theorem:subhex}, and 
\ref{Theorem:iff}. Block diagonal matching fields are introduced in Section \ref{sec:block} and the proof of Theorem \ref{Theorem:2block} is also given here. The final section defines matching field polytopes and provides some examples as well as  remarks about their combinatorics. 

\vspace{0.5cm}
\noindent{\bf Acknowledgement.} {The authors are very grateful to Alex Fink, J\"urgen Herzog, Mateusz Micha\l{}ek, Felipe Rinc\'on and Bernd Sturmfels
for 
many helpful conversations. We would also like to thank  Georg Loho and Volkmar Welker for helpful comments on a preliminary version of this manuscript.

This project was started while the second author was visiting MPI Leipzig and completed with the support of the Bergen Research Foundation project  grant ``Algebraic and topological cycles in tropical and complex geometry". The first author was partially
supported by EPSRC grant EP/R023379/1. 
}

\smallskip
We would like to remark that this paper was accepted for a poster in FPSAC 2018. However the extended abstract did not appear in the proceedings because the authors could not attend the conference.

\section{Preliminaries}\label{prelim} 

Throughout we set  $[n]:= \{1, \dots , n\}$ and we use  $\mathbf{I}_{k, n}$ to denote the collection of subsets of $[n]$ of size $k$. The symmetric group on $k$ elements is denoted by $S_k$. We also fix a field $\mathbb{K}$. 

\smallskip

The Grassmannian $\Gr(k, n)$ is the space of all $k$ dimensional linear subspaces of $\mathbb{K}^n$. A point in $\Gr(k, n)$ can be represented by a $k\times n$ matrix with entries in $\mathbb{K}$. 
Let $X=(x_{ij})$ be a $k\times n$ matrix of indeterminates. For a subset $I = \{i_1,\ldots,i_k\} \in \mathbf{I}_{k, n}$,  let $X_I$ denote the $k\times k$ submatrix with the column indices $i_1,\ldots,i_k$. 
The \emph{Pl\"ucker forms (or Pl\"ucker coordinates)} are $P_I = \text{det}(X_I)$  for $I \in\mathbf{I}_{k, n}$. These forms determine the \emph{Pl\"ucker embedding} from
$ {\rm Gr}(k,n)$ into $\mathbb{P}^{{n\choose k}-1}$. 

In the following, we consider the polynomial ring $\mathbb{K}[x_{ij}]$ on the variables $x_{ij}$ with $1\leq i\leq k$ and $1\leq j\leq n$ and the polynomial ring $\mathbb{K}[P_I]$ on the Pl\"ucker variables with $|I|=k$.

\begin{Definition}\label{def:ideal}The Pl\"ucker ideal $\I_{k,n}$ is defined as the kernel of the map 
\begin{eqnarray}\label{eqn:pluckermap}
\begin{split} 
\psi\ \colon\ & \mathbb{K}[P_I]  \rightarrow \mathbb{K}[x_{ij}]  
\\ 
& P_{I}   \mapsto \text{det}(X_I).
\end{split}
\end{eqnarray}
The \emph{Pl\"ucker algebra} is the finitely generated algebra $\mathbb{K}[P_I ]/{\mathcal{I}_{k,n}}$ 
denoted by $\mathcal{A}_{k,n}$.
\end{Definition}

\begin{Definition}\label{def:matching}
A $k\times  n$ matching field is a map $\Lambda: \mathbf{I}_{k,n}  \to S_k$.
\end{Definition}

Given a $k\times n$ matching field $\Lambda$ and a subset $I = \{i_1, \dots , i_k\}  \in \mathbf{I}_{k, n}$ 
we consider the set to be ordered by $i_1 < \dots < i_{k}$. We think of the permutation {$\sigma=\Lambda(I)$} as inducing a new ordering on the elements of $I$ 
where the position of $i_s$ is determined by the value of $\sigma(s)$.
It is convenient to represent the variable $P_I$ as a $k \times 1$ tableau 
 where $(\sigma(r), 1)$ contains $i_{r}$.

\begin{Definition}
Let $\Lambda$ be a size $k \times n$ matching field. 
A $\Lambda$-tableau is a tableau of size $k \times d$ for any $d\geq 1$ with entries in $[n]$,
so that the entries in each column are pairwise distinct and filled according to the order determined by $\Lambda$. 
\end{Definition}

\begin{Example}\label{ex:diagonal1}
The \emph{diagonal matching field} assigns to
each subset $I \in \mathbf{I}_{k,n}$ the identity permutation \cite[Example 1.3]{sturmfels1993maximal}.
Therefore, a $\Lambda$-tableau is a rectangular tableau of size $k \times d$ filled with entries in $[n]$ such that the columns are strictly increasing.  
\end{Example}

\begin{Example}\label{ex:pointed1}
A $k \times n$ matching field is called \emph{pointed} if there exists $i_1, \dots, i_k \in [n]$ such that if $i_s \in I$ for some 
$1 \leq s \leq k$ then $\Lambda(I)(s) = s$ \cite[Example 1.4]{sturmfels1993maximal}. In other words, if $i_s \in I$ for some $1\leq s \leq k$ then the column corresponding to $P_I$ contains $i_s$ in row $s$. Below are the tableaux representing $P_I$ for a pointed matching field of size $3 \times 5$ which is otherwise filled diagonally:
$$\begin{array}{c}1 \\2  \\ 3 \end{array} , \quad
\begin{array}{c} 1 \\2  \\4 \end{array},\quad
\begin{array}{c}1  \\2 \\ 5 \end{array} , \quad
\begin{array}{c}1 \\ 4 \\ 3  \end{array} ,\quad
\begin{array}{c}1 \\5  \\ 3\end{array} ,\quad
\begin{array}{c}4 \\ 2\\ 3  \end{array} ,\quad
\begin{array}{c}5 \\2 \\ 3\end{array} ,\quad
\begin{array}{c} 4 \\ 2   \\ 5 \end{array}. 
$$
Generalising this,  we say that a size $k \times n$ matching field $\Lambda$ is pointed on $S \subset [n]$ if for all $i \in S$ there exists a $j_i$ such that $i$ always appears in row $j_i$ of any  $\Lambda$ matching field tableau. 
Here $S$ need not have size equal to $k$. 
\end{Example}

A monomial $\Pi_{I \in A} P_I $ can be represented by a  $\Lambda$-tableau of size $k \times |A|$ given by the concatenation of the columns with content $I$ filled according to the matching field. 
To each monomial $\Pi_{I \in A} P_I $ we associate a sign $\epsilon_{A} = \pm 1$ determined by the signature of the permutations $\Lambda(I)$ for all $I \in A$. More precisely, 
$$\epsilon_{A} = \Pi_{I \in A} \text{sgn}(\Lambda(I)).$$

\begin{Definition}
Given a matching field $\Lambda$, the \emph{matching field  ideal} $\mathcal{I}_{\Lambda} \subset \mathbb{K}[x_{ij}]$ is  generated by the 
 binomial relations  
\begin{eqnarray}\label{eq:binomials}
\epsilon_A \Pi_{I \in A} P_I - \epsilon_B \Pi_{J \in B} P_J
\end{eqnarray}
if and only if the  contents of the corresponding  $\Lambda$-tableau of size $k \times |A|$  are  row-wise equal.
\end{Definition}

To $I \in \mathbf{I}_{k, n}$ {with $\sigma=\Lambda(I)$} we associate the monomial 
$$ 
\textbf{x}_{\Lambda(I)}:=x_{\sigma(1) i_{1}}x_{\sigma(2)i_2}\cdots x_{{\sigma(k)i_k}}. 
 $$
A $k\times n$ matching field $\Lambda$, 
gives a map of polynomial rings 
\begin{eqnarray}\label{eqn:monomialmap}
\begin{split}
\phi_{\Lambda} \colon\  & \mathbb{K}[P_I]  \rightarrow \mathbb{K}[x_{ij}]  
\\ 
& P_{I}   \mapsto \text{sgn}(\Lambda(I)) \textbf{x}_{\Lambda(I)}.
\end{split}
\end{eqnarray}

\smallskip

\begin{Proposition}
Given a matching field $\Lambda$, the matching field ideal $\mathcal{I}_{\Lambda}$ is the kernel of the monomial map
$\phi_{\Lambda}$ from Equation (\ref{eqn:monomialmap}). 
\end{Proposition}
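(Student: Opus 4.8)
The plan is to peel the signs off $\phi_\Lambda$, reduce to a genuine toric ideal, and transport the conclusion back through a diagonal rescaling. First I would introduce the sign-free monomial map $\overline{\phi}_\Lambda\colon \mathbb{K}[P_I]\to\mathbb{K}[x_{ij}]$ given by $P_I\mapsto \mathbf{x}_{\Lambda(I)}$. Its kernel is a toric ideal in the classical sense, so by the standard structure theory of such ideals (see, e.g., \cite{sturmfels1996grobner}) it is generated by the pure-difference binomials $\prod_{I\in A}P_I-\prod_{J\in B}P_J$ for which $\overline{\phi}_\Lambda\big(\prod_{I\in A}P_I\big)=\overline{\phi}_\Lambda\big(\prod_{J\in B}P_J\big)$, where $A,B$ range over multisets of elements of $\mathbf{I}_{k,n}$.

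Next I would identify precisely when two such image monomials coincide. Writing $\sigma=\Lambda(I)$ for $I=\{i_1<\dots<i_k\}$, the cell in row $s$ of the tableau of $P_I$ holds $i_{\sigma^{-1}(s)}$, so $\mathbf{x}_{\Lambda(I)}=\prod_{s=1}^k x_{s,\,i_{\sigma^{-1}(s)}}$; that is, $x_{s,j}$ divides $\mathbf{x}_{\Lambda(I)}$ exactly when $j$ occupies row $s$ of that tableau. Consequently the exponent of $x_{s,j}$ in $\overline{\phi}_\Lambda\big(\prod_{I\in A}P_I\big)$ counts the occurrences of $j$ in row $s$ of the $\Lambda$-tableau of size $k\times|A|$ attached to $\prod_{I\in A}P_I$. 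Hence $\overline{\phi}_\Lambda\big(\prod_{I\in A}P_I\big)=\overline{\phi}_\Lambda\big(\prod_{J\in B}P_J\big)$ if and only if the two tableaux are row-wise equal, which is exactly the condition in the definition of $\mathcal{I}_\Lambda$.

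Finally I would reinstate the signs. Let $\tau\colon\mathbb{K}[P_I]\to\mathbb{K}[P_I]$ be the $\mathbb{K}$-algebra automorphism with $\tau(P_I)=\text{sgn}(\Lambda(I))\,P_I$; it is an automorphism because each $\text{sgn}(\Lambda(I))=\pm1$ is a unit, and $\tau^{-1}=\tau$. A direct check on generators using Equation (\ref{eqn:monomialmap}) gives $\phi_\Lambda=\overline{\phi}_\Lambda\circ\tau$, so $\ker\phi_\Lambda=\tau^{-1}(\ker\overline{\phi}_\Lambda)=\tau(\ker\overline{\phi}_\Lambda)$. Applying $\tau$ to a generator $\prod_{I\in A}P_I-\prod_{J\in B}P_J$ of $\ker\overline{\phi}_\Lambda$ yields $\epsilon_A\prod_{I\in A}P_I-\epsilon_B\prod_{J\in B}P_J$, which is precisely the binomial (\ref{eq:binomials}) generating $\mathcal{I}_\Lambda$; conversely each such binomial is the $\tau$-image of a generator of $\ker\overline{\phi}_\Lambda$. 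Therefore $\ker\phi_\Lambda=\mathcal{I}_\Lambda$.

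The only substantive input is the classical fact that the kernel of a monomial map is binomial and generated by the differences supported on pairs with equal image; the tableau translation and the sign bookkeeping are routine. I expect the rescaling step to be where one must be most careful, in particular verifying the factorization $\phi_\Lambda=\overline{\phi}_\Lambda\circ\tau$ and confirming that $\tau$ carries the pure binomials onto exactly the signed generators of $\mathcal{I}_\Lambda$.
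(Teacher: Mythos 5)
Your proof is correct. The paper in fact states this proposition without any proof, treating it as an instance of standard toric-ideal theory, and your argument supplies exactly the expected details: the kernel of the sign-free monomial map $P_I\mapsto \mathbf{x}_{\Lambda(I)}$ is spanned by pure-difference binomials with equal images (\cite{sturmfels1996grobner}), equality of images translates into row-wise equality of the associated $\Lambda$-tableaux, and the involutive rescaling $\tau(P_I)=\sgn(\Lambda(I))P_I$ carries these generators onto the signed binomials $\epsilon_A\prod_{I\in A}P_I-\epsilon_B\prod_{J\in B}P_J$ defining $\mathcal{I}_\Lambda$.
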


\begin{Definition}\label{def:coherent}
A $k \times n$ matching field $\Lambda$  is \emph{coherent} if there exists a $k \times n$  matrix $M$ with entries in $\mathbb{R}$ 
such that 
for every  $I  \in \mathbf{I}_{k, n}$ 
the initial form of the  Pl\"ucker form  $P_I \in \mathbb{K}[x_{ij}]$, the sum of all terms in $M_I$ of lowest weight, is  $\text{in}_M (P_I) = \text{sgn}(\Lambda(I)) \mathbf{x}_{\Lambda(I)}$.  
In this case, we say that the matrix $M$ \emph{induces the matching field} $\Lambda$. 
\end{Definition}

\begin{Example}
When  $k = 2$ all coherent matching fields are induced by a total ordering on the set $[n]$, see \cite[Proposition 1.11]{sturmfels1993maximal}. 
\end{Example}

\begin{Example}\label{ex:diagonal2}
The diagonal matching field of size $k \times n$  is coherent 
\cite[Example 1.3]{sturmfels1993maximal}.
For example, this matching field is induced by a $k\times n$ weight matrix $M$ whose $i,j$-th entry 
is  $(i-1)(n-j)$. Therefore, we have,  
\[
M=\begin{bmatrix}
    0  & 0  & 0  & \cdots & 0  \\
    n-1 &\cdots & 2 & 1 & 0  \\
   2(n-1) &\cdots &   4 & 2  & 0    \\
    \cdots  & \cdots  & \cdots &  \cdots   \\
     (k-1)(n-1)& \cdots & 2(k-1)  & k-1   & 0    \\
\end{bmatrix}.
\]
For any size $k$ subset $I=\{i_1,i_2,\ldots,i_k\}$ with $i_1<i_2<\cdots<i_k$ the unique term in the determinant of $M_I$ with the lowest weight is the diagonal term.  
\end{Example}

\begin{Example}\label{ex:pointed2}
Recall the notion of a pointed matching field from Example \ref{ex:pointed1}. If a pointed matching field is coherent then it can be induced by a weight matrix $M$ such that for all $1 \leq s \leq k$ the  $i_s$-th column has entries $M >\!\!>0 $ except for in row $s$ where the entry can be chosen to be $0$. 
\end{Example}

\begin{Definition}\label{def:toric}
Let $\I$ be an ideal in the polynomial ring $S = \mathbb{K}[y_1, \dots, y_m]$ and let $w \in \mathbb{R}^m$. 
The initial degeneration with respect to $w$ is called \emph{toric} if the initial ideal $\ini_w(\I)$ is prime and binomial. 
\end{Definition}

\begin{Definition}\cite{robbiano1990subalgebra}\label{def:Khovanskii}
The set of Pl\"ucker forms $\{P_I\}_{I\in \mathbf{I}_{k,n}}\subset \mathbb{K}[x_{ij}]$ is a \emph{Khovanskii basis} for the Pl\"ucker algebra $\mathcal{A}_{k,n}$ with respect to a weight matrix $M$ 
if for all $I \in  \mathbf{I}_{k,n} $ the initial form $\ini_M(P_I) $ is a monomial and 
$\ini_{w_M}(\mathcal{A}_{k,n})=\mathbb{K}[\ini_M(P_I)]_{I \in \mathbf{I}_{k,n}}$. Here $w_M$ is the weight vector on the variables $P_I$ induced by the weight matrix $M\in\mathbb{R}^{k\times n}$ on the variables $x_{ij}$. 

\end{Definition}

The following theorem intimately relates Khovanskii bases and toric initial  degenerations. It is phrased in the context of matching fields and Grassmannians. 

\begin{Theorem}\cite[Theorem  11.4]{sturmfels1996grobner}\label{Theorem:Stu}
The set of Pl\"ucker forms  $\{P_I\}_{I\in \mathbf{I}_{k,n}}\subset \mathbb{K}[x_{ij}]$ 
is a Khovanskii basis with respect to a weight matrix $M$ if and only if $\ini_{w_M}(\I_{k, n}) = \I_\Lambda$, where $w_M$ is the weight vector on the variables $P_I$ induced by $M$ and ${\Lambda}$ is the matching field induced by $M$. 
\end{Theorem}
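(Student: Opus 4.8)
The statement is the specialisation to Plücker algebras of the general correspondence between Khovanskii bases and weight initial ideals, so the plan is to reprove that correspondence in the present notation. Write $S = \mathbb{K}[P_I]$, let $\psi$ be the presentation map of Definition~\ref{def:ideal} with kernel $\I_{k,n}$, and let $\phi_\Lambda$ be the monomial map of Equation~(\ref{eqn:monomialmap}) with kernel $\I_\Lambda$. Since $M$ induces $\Lambda$, we have $\ini_M(\psi(P_I)) = \phi_\Lambda(P_I)$ for every $I$; because each $\phi_\Lambda(P_I)$ is a single signed monomial there is no cancellation upon multiplying, so $\ini_M(\psi(P^\alpha)) = \phi_\Lambda(P^\alpha)$ for every monomial $P^\alpha$ of $S$, and the $M$-weight of this monomial equals the $w_M$-weight of $P^\alpha$. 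Both $\I_{k,n}$ and $\I_\Lambda$ are homogeneous for the standard grading $\deg P_I = 1$.

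First I would record the inclusion $\ini_{w_M}(\I_{k,n}) \subseteq \I_\Lambda$, which holds with no hypothesis on $M$. Given $g \in \I_{k,n}$, expand $g = \sum_\alpha c_\alpha P^\alpha$ and let $d$ be the minimal $w_M$-weight occurring, so that $\ini_{w_M}(g) = \sum_{w_M(P^\alpha) = d} c_\alpha P^\alpha$. Applying $\psi$ and sorting by $M$-weight, the part of $\psi(g)$ of weight exactly $d$ is $\sum_{w_M(P^\alpha)=d} c_\alpha \phi_\Lambda(P^\alpha) = \phi_\Lambda(\ini_{w_M}(g))$, since the monomials of larger $w_M$-weight contribute only in $M$-weight strictly above $d$. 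As $\psi(g) = 0$ its weight-$d$ part vanishes, whence $\ini_{w_M}(g) \in \ker\phi_\Lambda = \I_\Lambda$; as $\ini_{w_M}(\I_{k,n})$ is generated by these leading forms, the inclusion follows.

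It then remains to compare Hilbert functions in the standard grading, which is where the real content lies. On the ideal side a weight initial ideal preserves the Hilbert function, so $\dim_{\mathbb{K}}(S/\ini_{w_M}(\I_{k,n}))_d = \dim_{\mathbb{K}}(S/\I_{k,n})_d = \dim_{\mathbb{K}}(\mathcal{A}_{k,n})_d$. On the algebra side, passing to lowest-weight forms is a vector space isomorphism in each degree, so the initial algebra satisfies $\dim_{\mathbb{K}}\ini_{w_M}(\mathcal{A}_{k,n})_d = \dim_{\mathbb{K}}(\mathcal{A}_{k,n})_d$, while $R' := \mathbb{K}[\ini_M(P_I)] \subseteq \ini_{w_M}(\mathcal{A}_{k,n})$ satisfies $\dim_{\mathbb{K}} R'_d = \dim_{\mathbb{K}}(S/\I_\Lambda)_d$ because $S/\I_\Lambda \cong R'$. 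Thus the Khovanskii condition $\ini_{w_M}(\mathcal{A}_{k,n}) = R'$ is equivalent, degree by degree, to $\dim_{\mathbb{K}}(S/\I_\Lambda)_d = \dim_{\mathbb{K}}(S/\ini_{w_M}(\I_{k,n}))_d$ for all $d$. Combining this with the inclusion of the previous paragraph and the elementary fact that homogeneous ideals $\I_1 \subseteq \I_2$ with the same Hilbert function are equal, I conclude $\ini_{w_M}(\I_{k,n}) = \I_\Lambda$ exactly when the Plücker forms form a Khovanskii basis.

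The hard part is the bookkeeping behind the two Hilbert-function identities: one must confirm that the $w_M$-degeneration of $S$ and the $M$-degeneration of the subalgebra $\mathcal{A}_{k,n} \subseteq \mathbb{K}[x_{ij}]$ are both flat, i.e. preserve dimensions in each graded piece, and that the Plücker grading on $S$ and the induced $x$-grading (where $\deg_x = k$ times the Plücker degree) are compatible so that all the counts sit in matching components. These are the standard facts that a weight initial ideal and the associated graded of a filtered algebra preserve Hilbert functions; once they are in place, the theorem is a formal consequence of the leading-form inclusion. An equivalent route to the nontrivial inclusion, avoiding dimension counts, is the subduction algorithm: under the Khovanskii hypothesis every binomial of $\I_\Lambda$ lifts, step by step in increasing $M$-weight, to an element of $\I_{k,n}$ with the same $w_M$-initial form, the termination of which is again guaranteed by finite dimensionality in each degree.
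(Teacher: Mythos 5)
The paper itself contains no proof of this statement: it is quoted directly from Sturmfels (Theorem 11.4 of \emph{Gr\"obner Bases and Convex Polytopes}), so there is no internal argument to compare yours against; I will assess your proposal against the standard proof of the cited result. Your argument is correct. The unconditional inclusion $\ini_{w_M}(\I_{k,n}) \subseteq \I_\Lambda$ is done exactly right: isolating the lowest $M$-weight part of $\psi(g)$ works because each $\phi_\Lambda(P^\alpha)$ is a single signed monomial of weight precisely $w_M(P^\alpha)$, so no cancellation can occur in the bottom weight. The reduction of the Khovanskii condition to equality of Hilbert functions, and then to equality of ideals via the inclusion, is also sound, and it is a genuinely different route from Sturmfels' own proof, which runs through the subduction algorithm and lifting of relations (the alternative you sketch in your last sentence). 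What your route buys is a short, purely dimension-theoretic argument; what it costs is the two degeneration facts you defer, and these deserve care. Fact (i), that $\dim_{\mathbb{K}}(S/\ini_{w}(\I))_d = \dim_{\mathbb{K}}(S/\I)_d$ for a homogeneous ideal $\I$ and arbitrary weight $w$, is standard (refine $w$ to a monomial order $\prec$ and use $\ini_\prec(\ini_w(\I)) = \ini_{\prec_w}(\I)$). For fact (ii), your phrase ``passing to lowest-weight forms is a vector space isomorphism in each degree'' is loose: the map $f \mapsto \ini_M(f)$ is not linear. The correct statement is that each finite-dimensional graded piece $(\mathcal{A}_{k,n})_{kd}$ carries a weight filtration, a basis adapted to this filtration has linearly independent initial forms, and these initial forms span the degree-$kd$ piece of the initial algebra; one also needs the observation that the initial algebra is graded by $x$-degree, which holds because all $\ini_M(P_I)$ are monomials of the same degree $k$, so Pl\"ucker degree $d$ matches $x$-degree $kd$ throughout and both $\I_\Lambda$ and $\ini_{w_M}(\I_{k,n})$ are homogeneous. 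With those two lemmas spelled out, your proof is complete and could serve as a self-contained substitute for the citation.
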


\section{Coherent matching fields and tropical hyperplane arrangements}\label{sec:arr}

In  \cite{fink2015stiefel}, Fink and Rinc\'on provide a link between  tropical hyperplane arrangements and coherent matching fields (and more generally multi-matching fields). We will summarize the facts needed here and refer the reader to \cite{fink2015stiefel} for more details.  In \cite{fink2015stiefel},  tropical hyperplane arrangements are described in tropical projective space. We do not require this level of generality here, therefore we simplify our considerations in the following summary. 

Let  $M=(a_{ij}) \in \mathbb{R}^{k \times n}$ be a weight matrix. For each $1 \leq j \leq n$ consider the  piecewise linear function $F_j: \mathbb{R}^{k-1} \to \mathbb{R}$, given by 
 \begin{eqnarray}\label{eqn:linearform}
 F_j (x) = \max \{ a_{1j}, a_{2j} + x_2, \dots, a_{kj}+ x_k\}. 
 \end{eqnarray}

In $\mathbb{R}^{k-1}$ let $\Sigma$ be the $k-2$ dimensional polyhedral fan whose top dimensional cones
are spanned by subsets of size $k-2$ of the vectors  $v_1, \dots, v_{k}$, where $v_1 = (1, \dots, 1)$ and 
$v_i = - e_{i-1}$ otherwise. For $k=3$ this amounts to three rays in the directions $(1, 1), (-1, 0)$ and $(0, -1)$ emanating from the origin. 
If $a_{1j} = 0$, then the  non-differentiability locus of $F_j$ is the fan $\Sigma \subset \mathbb{R}^{k-1}$ translated by the vector  $(-a_{2j} ,  \dots, -a_{kj}) \in \mathbb{R}^{k-1}$. Any  coherent matching field is  induced by a weight matrix $M$ whose first row consists of zeros. 
So we may assume that $a_{1j} = 0$ for all $j$.

 \begin{Remark}\label{rem:maxmin}
 In this paper we purposely use the minimum conventions for tropical arithmetic and the maximum conventions for tropical geometry. We do this to avoid the appearance of many minus signs when passing from the algebra of weight matrices to the geometry of tropical hyperplane arrangements. 
\end{Remark}

\begin{figure}
 \begin{center}
\begin{tikzpicture} [scale = .4, very thick = 1mm, every node/.style={inner sep=0,outer sep=0}]

\node (i) at (2,3)  [Cblack3] {};
\node (i0) at (7,8)  [Cwhite]{} ;
\node (i1) at (0,3)  [Cwhite] {\huge{$2$}};
\node (i2) at (2,0    ) [Cwhite]  {};

 \foreach \from/\to in {i/i0,i/i1,i/i2}
  \draw[black] (\from) -- (\to);

\node (j) at (4,2)  [Cblack3]  {};
\node (j0) at (9,7)  [Cwhite]{} ;
\node (j1) at (0,2)  [Cwhite] {\huge{$1$}};
\node (j2) at (4,0    ) [Cwhite]  {};

 \foreach \from/\to in {j/j0,j/j1,j/j2}
  \draw[black] (\from) -- (\to);

\node (j) at (8,4)  [Cblack3]  {};
\node (j0) at (10,6)  [Cwhite]{} ;
\node (j1) at (0,4)  [Cwhite] {\huge{$3$}};
\node (j2) at (8,0    ) [Cwhite]  {};

 \foreach \from/\to in {j/j0,j/j1,j/j2}
  \draw[black] (\from) -- (\to);

\node (j) at (18+ 4,6)  [Cblack3]  {};
\node (j0) at (18+6,8)  [Cwhite]{} ;
\node (j1) at (18+0,6)  [Cwhite] {\huge{$2$}};
\node (j2) at (18+4,0    ) [Cwhite]  {};

 \foreach \from/\to in {j/j0,j/j1,j/j2}
  \draw[black] (\from) -- (\to);

\node (j) at (18+ 6,4)  [Cblack3]  {};
\node (j0) at (18+8, 6)  [Cwhite]{} ;
\node (j1) at (18+0,4)  [Cwhite] {\huge{$1$}};
\node (j2) at (18+6,0    ) [Cwhite]  {};

 \foreach \from/\to in {j/j0,j/j1,j/j2}
  \draw[black] (\from) -- (\to);

\node (j) at (18+ 2,2)  [Cblack3]  {};
\node (j0) at (18+7,7)  [Cwhite]{} ;
\node (j1) at (18+0,2)  [Cwhite] {\huge{$3$}};
\node (j2) at (18+2,0    ) [Cwhite]  {};

 \foreach \from/\to in {j/j0,j/j1,j/j2}
  \draw[black] (\from) -- (\to);

\end{tikzpicture}\caption{\label{concurrent} Two arrangements of three tropical lines in $\mathbb{R}^2$. On the left the configuration intersects properly on the  right the three lines are concurrent.
}
\end{center}
\end{figure}
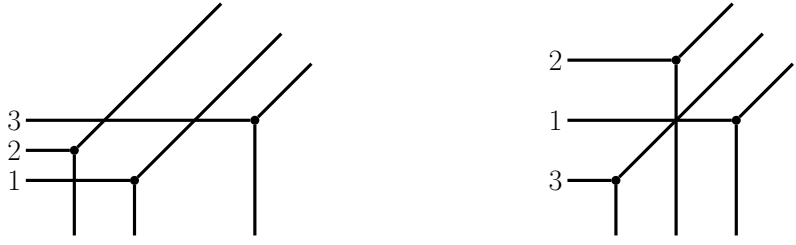

A $k \times n$  weight matrix $M \in \mathbb{R}^{k \times n}$,  whose first row consists of zeros,  produces an arrangement of tropical hyperplanes $\mathcal{A} = \{ H_1, \dots , H_n\}$ defined by the functions  $F_1, \dots , F_n$, whose coefficients come from $M$. 
To each $k-1$ dimensional cell $\tau$ of the  complement of the arrangement $\mathcal{A}$ in  $\mathbb{R}^{k-1}$ there is an associated \emph{covector} $c_{\tau} \in \mathcal{P}[n]^k$, where $\mathcal{P}[n]$ denotes the power set of $n$. 
The $i$-th entry of the covector $c_{\tau}$ is a subset $S_i \subset [n]$ corresponding to the collection of  hyperplanes in $\mathcal{A}$ which intersect the ray  $x + tv_i$ for $x \in \tau^{\circ}$ and $t \geq 0$. Here the vectors $v_i = -e_{i-1}$ for $i = 2, \dots, k-1$ and $v_1 = (1, \dots, 1)$. The \emph{coarse covector}
of a cell is simply the vector which records the sizes of the subsets of the covector.

\begin{Definition}
A collection of $k$ hyperplanes $H_1, \dots, H_k$  in $\mathbb{R}^{k-1}$ is said to   \emph{intersect properly} if $\cap_{i = 1}^k H_i = \emptyset$.
Equivalently, a collection of $k$ hyperplanes $H_1, \dots, H_k$  in $\mathbb{R}^{k-1}$ intersects properly if and only if there is a $k-1$ dimensional cell in the complement of $\cup_{i = 1}^k  H_i$  
whose coarse covector is  $(1, \dots, 1)$. 
\end{Definition}

\begin{Example}
Consider the two  $3 \times 3$ weight matrices, \[
M_1=\begin{bmatrix}
    0  & 0  & 0  \\
      4& 2 & 8 \\
        2& 3 & 4  \\
\end{bmatrix},
\qquad 
M_2=\begin{bmatrix}
    0  & 0  & 0  \\
     6 & 4 & 2  \\
      4  &  6 & 2   \\
\end{bmatrix}.
\]
The matrix $M_1$ corresponds to tropical lines intersecting properly in Figure~\ref{concurrent} (left) and the matrix $M_2$ corresponds to concurrent tropical lines  in Figure~\ref{concurrent} (right). 
\end{Example}

The following proposition can be extracted from \cite[Proposition 2.4]{DevSturm} and \cite[Propositions 5.11 and 5.12]{fink2015stiefel}. 
\begin{Proposition}\label{prop:initialPlucker}
Let $M$ be a $k \times n$ weight matrix such that for any size $k$ subset $I \subset [n]$,   the collection of hyperplanes $\{H_i\}_{i\in I}$ intersects properly. 
Then
$$\ini_{M} (P_{I}) =\text{sgn}({\Lambda(I)}) x_{1c_1}x_{2c_2}\dots x_{kc_{k}}$$
where $(c_1, \dots, c_{k})$ is the covector of the unique cell with coarse covector $(1, 1, \dots, 1)$ and 
$\text{sgn}({\Lambda(I)})$ is the sign of the permutation $i \mapsto \sigma(i)$, where $c_{\sigma(1)}<\cdots<c_{\sigma({k})}$.
\end{Proposition}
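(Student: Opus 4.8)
The plan is to translate the statement into the language of the linear assignment problem on the $k\times k$ submatrix $M_I=(a_{ri})_{r\in[k],\,i\in I}$ and to read off its unique optimum from the covector of the distinguished cell. Expanding the Pl\"ucker form gives
\[
P_I=\det(X_I)=\sum_{\tau\in S_k}\sgn(\tau)\prod_{r=1}^{k}x_{r,i_{\tau(r)}},
\]
where the $M$-weight of the monomial indexed by $\tau$ is $w(\tau)=\sum_{r}a_{r,i_{\tau(r)}}$. By Definition \ref{def:coherent}, $\ini_M(P_I)$ is the signed sum of the terms of lowest weight, so it suffices to prove three things: that $\min_\tau w(\tau)$ is attained by a single permutation $\tau^\ast$; that its monomial is $x_{1c_1}\cdots x_{kc_k}$; and that $\sgn(\tau^\ast)=\sgn(\Lambda(I))$.

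First I would identify the optimal assignment with the covector. Fix a generic point $x$ in the cell whose coarse covector is $(1,\dots,1)$ and normalise $x_1=0$. Having coarse covector $(1,\dots,1)$ means that for each of the $k$ directions the covector singles out exactly one of the hyperplanes $\{H_i\}_{i\in I}$, equivalently it records a bijection $r\mapsto c_r$ between the rows $[k]$ and the columns $I$. The geometric meaning of this cell is that for every column $i\in I$ the selected row attains the minimum of $\{a_{si}+x_s\}_{s\in[k]}$; this is the content extracted from \cite[Proposition 2.4]{DevSturm} and \cite[Propositions 5.11, 5.12]{fink2015stiefel}, once the minimum/maximum conventions of Remark \ref{rem:maxmin} are reconciled. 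These inequalities say precisely that the coordinates $(x_1,\dots,x_k)$ furnish a system of optimal dual potentials for the assignment problem $\min_\tau w(\tau)$ and that $r\mapsto c_r$ meets them with equality, i.e.\ satisfies complementary slackness. Hence $r\mapsto c_r$ is an optimal assignment and the corresponding monomial is $\prod_r x_{r,c_r}=x_{1c_1}\cdots x_{kc_k}$.

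Next I would upgrade optimality to \emph{unique} optimality. The minimiser $\tau^\ast$ is unique exactly when the tropical determinant of $M_I$ is nonsingular, that is, when $\min_\tau w(\tau)$ is attained only once; geometrically this is the statement that the arrangement $\{H_i\}_{i\in I}$ has a single cell of coarse covector $(1,\dots,1)$, which is by definition the condition that the hyperplanes intersect properly. Thus the proper-intersection hypothesis rules out a second permutation of minimal weight, so $\ini_M(P_I)$ is a single signed term. I expect this equivalence — between proper intersection and tropical nonsingularity of $M_I$ — to be the main obstacle, as it is where the cell structure of the arrangement must be matched with uniqueness of the optimum; I would settle it by invoking the cited results of Develin–Sturmfels and Fink–Rinc\'on and by carefully tracking the conventions flagged in Remark \ref{rem:maxmin}.

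Finally I would pin down the sign. Let $\sigma\in S_k$ be the permutation with $c_{\sigma(1)}<\cdots<c_{\sigma(k)}$; then the sorted list $c_{\sigma(1)},\dots,c_{\sigma(k)}$ is $i_1<\cdots<i_k$, so $c_{\sigma(s)}=i_s$. On the other hand the surviving monomial $\prod_r x_{r,c_r}$ is the $\tau^\ast$-term, so $c_r=i_{\tau^\ast(r)}$. Combining the two relations gives $i_{\tau^\ast(\sigma(s))}=i_s$, whence $\tau^\ast=\sigma^{-1}$ and $\sgn(\tau^\ast)=\sgn(\sigma)=\sgn(\Lambda(I))$. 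Therefore $\ini_M(P_I)=\sgn(\Lambda(I))\,x_{1c_1}\cdots x_{kc_k}$, which is the asserted formula and is consistent with $\sgn(\Lambda(I))\,\mathbf{x}_{\Lambda(I)}$ in Definition \ref{def:coherent}.
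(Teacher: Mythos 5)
Your argument is correct, but it takes a genuinely different route from the paper --- necessarily so, because the paper gives no proof of Proposition \ref{prop:initialPlucker} at all: it only records that the statement ``can be extracted from'' \cite[Proposition 2.4]{DevSturm} and \cite[Propositions 5.11 and 5.12]{fink2015stiefel}. You instead give an essentially self-contained derivation through the linear assignment problem: expand $P_I=\det(X_I)$, read the covector of the $(1,\dots,1)$-cell as an assignment $r\mapsto c_r$ together with dual potentials supplied by a point of the cell, conclude optimality of that assignment by complementary slackness, and pin down the coefficient by showing $\tau^\ast=\sigma^{-1}$. The citation is economical; your route buys transparency and, importantly, the sign: the cited results are statements about unsigned tropical polynomials and covectors, so the factor $\sgn(\Lambda(I))$ --- which is exactly what Definition \ref{def:coherent} and the map (\ref{eqn:monomialmap}) require --- has to be computed by hand in any case, and your final paragraph does this correctly.

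Two refinements would make the sketch airtight. First, the uniqueness step does not need the equivalence between proper intersection and tropical nonsingularity of $M_I$ as an imported black box; that is not where the difficulty lies. Since the $(1,\dots,1)$-cell is open, a point $x$ of it lies on no hyperplane $H_i$, so each dual variable $w_i$ is attained at a \emph{unique} row; hence for any $\tau'\neq\tau^\ast$ at least one complementary-slackness inequality is strict and $w(\tau')>w(\tau^\ast)$. This gives uniqueness of the minimizer directly from the existence of one cell with coarse covector $(1,\dots,1)$ (which is what the paper's definition of proper intersection supplies), and as a by-product uniqueness of that cell, since any such cell produces an optimal assignment and its covector determines it as a polyhedron. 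Second, the max/min reconciliation you defer is a real sign trap rather than a formality: with the literal formula (\ref{eqn:linearform}) (apex of $H_j$ at $-(a_{2j},\dots,a_{kj})$), the ray rule defining covectors selects the row \emph{maximizing} $a_{si}+x_s$, i.e.\ the highest-weight term of $\det(M_I)$, which contradicts the lowest-weight convention of Definition \ref{def:coherent}. The proposition holds under the convention actually used in the figures (apices at $+(a_{2j},\dots,a_{kj})$, cf.\ Remark \ref{rem:maxmin}), where the selected row minimizes $a_{si}-x_s$, so your potentials work after replacing $x$ by $-x$. A check against Example \ref{exam:diag} and Figure \ref{fig:diag}, where the covector $(1,2,3)$ must match the diagonal initial term $x_{11}x_{22}x_{33}$, settles the convention.
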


\begin{Corollary}
A $k \times n$ weight matrix  provides a monomial degeneration of the Pl\"ucker forms if and only if for any size $k$ subset $I \subset [n]$ the  collection of tropical hyperplanes $\{H_i\}_{i \in I}$ intersects properly. 
\end{Corollary}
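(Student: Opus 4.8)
The plan is to prove both implications by reducing, for each fixed $I \in \mathbf{I}_{k,n}$, the condition ``$\text{in}_M(P_I)$ is a monomial'' to a purely combinatorial statement about the $k\times k$ submatrix $M_I$, and then to match that statement with proper intersection of $\{H_i\}_{i\in I}$ via the standard dictionary of tropical linear algebra.

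First I would record that the initial form $\text{in}_M(P_I)$ depends only on $M_I$. Writing $I=\{i_1<\dots<i_k\}$, we have $P_I=\det(X_I)=\sum_{\sigma\in S_k}\text{sgn}(\sigma)\prod_{s=1}^{k}x_{\sigma(s)i_s}$, and the $M$-weight of the term indexed by $\sigma$ is $\sum_{s=1}^{k}a_{\sigma(s)i_s}$. By Definition \ref{def:coherent}, $\text{in}_M(P_I)$ is the sum of those terms of least weight. Since the columns $i_1,\dots,i_k$ are distinct, the monomial $\prod_s x_{\sigma(s)i_s}$ records $\sigma(s)$ as the row occurring in column $i_s$, so distinct permutations give distinct monomials and no cancellation occurs. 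Hence $\text{in}_M(P_I)$ is a monomial if and only if the minimum $\min_{\sigma\in S_k}\sum_{s=1}^{k}a_{\sigma(s)i_s}$ is attained by a unique permutation. Thus $M$ provides a monomial degeneration precisely when, for every $I$, this optimal assignment problem on $M_I$ has a unique solution.

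For the ``if'' direction, assume every collection $\{H_i\}_{i\in I}$ intersects properly; this is exactly the hypothesis of Proposition \ref{prop:initialPlucker}, which yields $\text{in}_M(P_I)=\text{sgn}(\Lambda(I))\,x_{1c_1}\cdots x_{kc_k}$ for every $I$, a single monomial, so $M$ provides a monomial degeneration. For the ``only if'' direction I would argue the contrapositive: suppose $\{H_i\}_{i\in I}$ fails to intersect properly for some $I$, so that $\bigcap_{i\in I}H_i\neq\emptyset$. Here I invoke the tropical singularity dictionary underlying Proposition \ref{prop:initialPlucker} (as in \cite{DevSturm, fink2015stiefel}): the $k$ tropical hyperplanes $\{H_i\}_{i\in I}$ in $\mathbb{R}^{k-1}$ share a common point if and only if $M_I$ is tropically singular, i.e. $\min_{\sigma}\sum_{s}a_{\sigma(s)i_s}$ is attained at least twice. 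By the previous paragraph this forces $\text{in}_M(P_I)$ to have at least two terms, hence not to be a monomial, which completes the contrapositive.

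The crux is precisely the equivalence between non-proper intersection of the $k$ hyperplanes and tropical singularity of $M_I$; everything else is bookkeeping, and I expect this to be the main obstacle. I would either cite the tropical-geometry sources directly or supply the covector argument: a $(k-1)$-cell of coarse covector $(1,\dots,1)$ is dual to a unique tropical vertex of the subdivision induced by the tropical determinant of $M_I$, so by the definition of proper intersection such a cell exists (equivalently $\bigcap_{i\in I}H_i=\emptyset$) exactly when that tropical determinant is attained at a single permutation. One should also take care to apply the ``minimum for arithmetic, maximum for geometry'' convention of Remark \ref{rem:maxmin} consistently when translating each apex condition $x\in H_j$ into equalities among the entries of $M_I$, since that is where sign errors are most likely to arise.
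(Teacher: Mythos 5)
Your proof is correct and is essentially the argument the paper intends: the corollary is stated there without a written proof, as an immediate consequence of Proposition \ref{prop:initialPlucker} together with the point--hyperplane duality imported from \cite{DevSturm} and \cite{fink2015stiefel}, and that duality (the hyperplanes $\{H_i\}_{i\in I}$ have a common point if and only if the tropical determinant of $M_I$ is attained at least twice) is exactly what you invoke for the ``only if'' direction. Your preliminary reduction---$\ini_M(P_I)$ is a monomial if and only if the minimizing permutation of $M_I$ is unique, since distinct permutations yield distinct monomials and no cancellation can occur---combined with Proposition \ref{prop:initialPlucker} for the ``if'' direction fills in the same route rather than taking a different one, and your closing caution about the min/max sign conventions is well placed, since that is precisely where the dictionary must be applied with care.
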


Next we compare the matching field ideal $\I_{\Lambda}$ 
with the initial degeneration of the Pl\"ucker ideal $\I_{k, n}$ {with respect to the weights on the Pl\"ucker forms induced by $\Lambda$. }

\begin{Definition}\label{def:hex}
A coherent matching field $\Lambda \colon \mathbf{I}_{3, 6} \to S_3$ \emph{is hexagonal} if it is the matching field of a  tropical hyperplane arrangement whose unique cell with coarse covector $(2, 2, 2)$ is a hexagon. 
A matching field $\Lambda \colon \mathbf{I}_{3, n} \to S_3$ is  \emph{non-hexagonal} if for every size $6$ subset $J$ the matching field $\Lambda|_J$ is not hexagonal. 
\end{Definition}

For a homogeneous ideal $\I$ we let $\I_d$ denote the elements of degree $d$.

\begin{Proposition}\label{prop:deg2hex}
Let $M \in \mathbb{R}^{3 \times n}$ be a weight matrix such that $\ini_M(P_I)$ is a monomial for all Pl\"ucker forms $P_I$. Then 
$(\I_{\Lambda})_2 = (\ini_{w_M}(\I_{3,n}))_2$ if and only if 
the matching field is non-hexagonal. 
\end{Proposition}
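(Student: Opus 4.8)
The plan is to pivot on the general containment $\ini_{w_M}(\I_{3,n}) \subseteq \I_\Lambda$, which holds for any weight matrix inducing $\Lambda$. Indeed, if $g \in \I_{3,n}$ then $\psi(g)=0$, and since each $\ini_M(P_I)$ is a monomial we have $\ini_M(\det X_I \cdot \det X_J)=\phi_\Lambda(P_I)\phi_\Lambda(P_J)$ with no cancellation, so the lowest $w_M$-weight part of $\psi(g)$ equals $\phi_\Lambda(\ini_{w_M}(g))$; as $\psi(g)=0$ this forces $\phi_\Lambda(\ini_{w_M}(g))=0$, i.e. $\ini_{w_M}(g)\in\ker\phi_\Lambda=\I_\Lambda$. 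This is one inclusion of Theorem~\ref{Theorem:Stu}, and in degree two it gives $(\ini_{w_M}(\I_{3,n}))_2\subseteq(\I_\Lambda)_2$ unconditionally. The content of the proposition is therefore the reverse inclusion. Since Gr\"obner degeneration preserves Hilbert functions, $\dim_{\mathbb{K}}(\ini_{w_M}(\I_{3,n}))_2=\dim_{\mathbb{K}}(\I_{3,n})_2=\binom{N+1}{2}-\dim_{\mathbb{K}}(\mathcal{A}_{3,n})_2$ with $N=\binom{n}{3}$, while $\dim_{\mathbb{K}}(\I_\Lambda)_2=\binom{N+1}{2}-m_\Lambda$, where $m_\Lambda$ is the number of \emph{distinct} degree-two leading monomials $\mathbf{x}_{\Lambda(I)}\mathbf{x}_{\Lambda(J)}$. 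The two inclusions thus collapse to the single numerical identity $(\I_\Lambda)_2=(\ini_{w_M}(\I_{3,n}))_2\iff m_\Lambda=\dim_{\mathbb{K}}(\mathcal{A}_{3,n})_2$, and I would reduce the whole statement to showing that this leading-monomial count is maximal exactly when $\Lambda$ is non-hexagonal.

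Next I would localise the problem to six indices. Any degree-two leading monomial $\mathbf{x}_{\Lambda(I)}\mathbf{x}_{\Lambda(J)}$ is supported on the columns of $I\cup J$, a set of size at most $6$, and by Proposition~\ref{prop:initialPlucker} the leading monomial of each $P_I$ is read off the cell of coarse covector $(1,\dots,1)$ in the arrangement of the lines $\{H_i\}_{i\in I}$, hence depends only on the submatching field $\Lambda|_J$ in the sense of Definition~\ref{def:submatching}. Consequently any coincidence among leading monomials that lowers the count below $\dim_{\mathbb{K}}(\mathcal{A}_{3,n})_2$ is witnessed inside $\Gr(3,J)$ for some $J$ with $|J|\le 6$. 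Thus $m_\Lambda=\dim_{\mathbb{K}}(\mathcal{A}_{3,n})_2$ holds if and only if the analogous identity holds for $\Lambda|_J$ on every six-element subset $J$, and it suffices to treat $n=6$ and decide when $m_{\Lambda|_J}$ is maximal.

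For $n=6$ the analysis is governed by the unique cell of coarse covector $(2,2,2)$, which is combinatorially either a triangle or a hexagon. I would first record a general fact: the initial form of any single quadratic Pl\"ucker relation is automatically a binomial lying in $\I_\Lambda$, because the tropical Pl\"ucker relations force the minimum weight among its terms to be attained at least twice, while it cannot be attained by three terms, as the three resulting $\pm1$-monomials could never cancel. Hence in both cases the relations themselves only produce matching-field binomials, and the dichotomy must be read from the count $m_{\Lambda|_J}$ directly. In the triangle (non-hexagonal) case, Proposition~\ref{prop:initialPlucker} shows that the six leading monomials attached to the triples meeting the $(2,2,2)$-cell have only the coincidences forced by row-wise equal $\Lambda$-tableaux, whence $m_{\Lambda|_J}=\dim_{\mathbb{K}}(\mathcal{A}_{3,6})_2$ and equality holds. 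In the hexagonal case the sixfold cyclic structure of the boundary forces one additional coincidence among these leading monomials that is \emph{not} of matching-field type, so $m_{\Lambda|_J}<\dim_{\mathbb{K}}(\mathcal{A}_{3,6})_2$; equivalently, the degree-two subduction ($S$-pair) attached to that coincidence terminates in a monomial outside $\{\mathbf{x}_{\Lambda(I)}\mathbf{x}_{\Lambda(J)}\}$, exhibiting the strict inclusion $(\ini_{w_M}(\I_{3,6}))_2\subsetneq(\I_\Lambda)_2$.

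The main obstacle is this last step: making the hexagon-versus-triangle dichotomy precise at the level of the six leading monomials and proving that the hexagon contributes \emph{exactly one} extra collision while the triangle contributes none. I would carry it out by fixing coordinates for the $(2,2,2)$-cell, labelling its edges by the six lines in cyclic order, and applying Proposition~\ref{prop:initialPlucker} to each relevant triple to write the leading monomials explicitly; the hexagon's sixfold boundary forces the covectors of three ``antipodal'' triples to agree after the matching-field permutations are applied, which is the sought coincidence, whereas the triangle's threefold boundary does not. Verifying that this is the only mechanism lowering the count -- that no subset of size at most five and no non-hexagonal six-subset can produce an unexpected collision -- is the technical crux, and it is here that the hypothesis that every $\ini_M(P_I)$ is a monomial (properness of every triple of lines, by the Corollary to Proposition~\ref{prop:initialPlucker}) is used in full.
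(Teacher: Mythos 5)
Your overall framework is sound and in fact coincides with the paper's own argument: the unconditional containment $\ini_{w_M}(\I_{3,n})\subseteq \I_\Lambda$, the Hilbert-function count reducing degree-$2$ equality to the numerical identity $m_\Lambda=\dim_{\mathbb{K}}(\mathcal{A}_{3,n})_2$, and the localization of that identity to contents supported on at most six columns are all correct, and together they are precisely the multigraded dimension comparison the paper runs. Your hexagonal direction also matches the paper, which makes it explicit: the hexagon forces covectors such as $(\{1,4\},\{2\},\{3,5,6\})$ and $(\{1,4\},\{2,3,5\},\{6\})$, hence the coincidence $\ini_M(P_{125})\ini_M(P_{436})=\ini_M(P_{425})\ini_M(P_{136})$, and together with the class of four coming from Lemma~\ref{lem:222} this caps the number of monomial classes in the $e_J$-graded piece at $4<5$.

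The gap is in the converse, and it is twofold. First, your structural claim that the $(2,2,2)$-cell ``is combinatorially either a triangle or a hexagon'' is false, and your entire plan for the non-hexagonal direction rests on it. Non-hexagonal only means that at least one of the six possible edges at the lattice point $(2,2,2)$ of the dual subdivision of $6\Delta_2$ is absent; the cell can then be a parallelogram, a $4$-gon, a pentagon, a segment, or fail to be two-dimensional altogether. The paper's own classification (Figure~\ref{fig:tableofpolytopes}) exhibits non-hexagonal matching fields of exactly these kinds --- e.g.\ the block diagonal matching fields $\BLambda_{4,2}$ ($4$-gon) and $\BLambda_{2,4}$ (pentagon) --- and all of them satisfy the degree-$2$ equality. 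So ``the triangle's threefold boundary does not produce the coincidence'' verifies only one of several cases; the actual crux, which the paper treats case by case over the possible missing edges, is that whenever any edge at $(2,2,2)$ is absent, at least one of the three ``antipodal'' coincidences fails (note your containment already forces at least two of them to hold, since the number of classes is at most five, so this is exactly what remains). Second, the contents with a repeated index (support of size at most five, multidegree $e_J+e_i-e_j$) are never argued: you fold them into the crux, but the paper disposes of them by a genuine reduction to $\Gr(2,n)$, where every coherent matching field yields a Khovanskii basis by \cite[Corollary 4.4]{SpeyerSturmfels}; without this ingredient your count could a priori fail in those degrees. A smaller point: your phrase ``coincidence not of matching-field type'' is internally inconsistent --- every monomial coincidence is a row-wise equality of $\Lambda$-tableaux by definition; the dichotomy you actually need is between coincidences that arise as initial forms of elements of $\I_{3,n}$ and those that do not.
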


Before giving the proof we pause to illustrate the condition presented in the above proposition with two examples.

\begin{figure}
 \begin{center}
\begin{tikzpicture} [scale = .3, very thick = 1mm, every node/.style={inner sep=0,outer sep=0}]

\node (w1) at (-.3,1) [Cwhite]  {$6$};
\node (w1) at (-.3,7) [Cwhite]  {$3$};
\node (w1) at (-.3,3) [Cwhite]  {$5$};
\node (w1) at (-.3,11) [Cwhite]  {$1$};
\node (w1) at (-.3,9) [Cwhite]  {$2$};
\node (w1) at (-.3,5) [Cwhite]  {$4$};

\node (k) at (1,1)  [Cblack3] {};
\node (k0) at (8,8)  [Cwhite]{} ;
\node (k1) at (0,1)  [Cwhite] {};
\node (k2) at (1,0) [Cwhite]  {};

 \foreach \from/\to in {k/k0,k/k1,k/k2}
  \draw[black] (\from) -- (\to);
  
\node (i) at (2,3)  [Cblack3] {};
\node (i0) at (8,9)  [Cwhite]{} ;
\node (i1) at (0,3)  [Cwhite] {};
\node (i2) at (2,0    ) [Cwhite]  {};

 \foreach \from/\to in {i/i0,i/i1,i/i2}
  \draw[black] (\from) -- (\to);

\node (j) at (3,5)  [Cblack3]  {};
\node (j0) at (8,10)  [Cwhite]{} ;
\node (j1) at (0,5)  [Cwhite] {};
\node (j2) at (3,0    ) [Cwhite]  {};

 \foreach \from/\to in {j/j0,j/j1,j/j2}
  \draw[black] (\from) -- (\to);

\node (l) at (4,7) [Cblack3]  {};
\node (l0) at (8,11)  [Cwhite]{} ;
\node (l1) at (0,7)  [Cwhite] {};
\node (l2) at (4,0    ) [Cwhite]  {};

 \foreach \from/\to in {l/l0,l/l1,l/l2}
  \draw[black] (\from) -- (\to);

\node (l) at (5,9)  [Cblack3]  {};
\node (l0) at (8,12)  [Cwhite]{} ;
\node (l1) at (0,9)  [Cwhite] {};
\node (l2) at (5,0    ) [Cwhite]  {};

 \foreach \from/\to in {l/l0,l/l1,l/l2}
  \draw[black] (\from) -- (\to);
  
  \node (l) at (6,11)  [Cblack3]  {};
\node (l0) at (8,13)  [Cwhite]{} ;
\node (l1) at (0,11)  [Cwhite] {};
\node (l2) at (6,0    ) [Cwhite]  {};

 \foreach \from/\to in {l/l0,l/l1,l/l2}
  \draw[black] (\from) -- (\to);

  \end{tikzpicture}
 \hspace{2cm}
\begin{tikzpicture} [scale = .3, very thick = 1mm, every node/.style={inner sep=0,outer sep=0}]

\node (w1) at (-.3,1) [Cwhite]  {$6$};
\node (w1) at (-.3,7) [Cwhite]  {$3$};
\node (w1) at (-.3,3) [Cwhite]  {$5$};
\node (w1) at (-.3,11) [Cwhite]  {$1$};
\node (w1) at (-.3,9) [Cwhite]  {$2$};
\node (w1) at (-.3,5) [Cwhite]  {$4$};

\node (k) at (2,11)  [Cblack3] {};
\node (k0) at (4,13)  [Cwhite]{} ;
\node (k1) at (0,11)  [Cwhite] {};
\node (k2) at (2,0) [Cwhite]  {};

 \foreach \from/\to in {k/k0,k/k1,k/k2}
  \draw[black] (\from) -- (\to);
  
\node (i) at (1,9)  [Cblack3] {};
\node (i0) at (4,12)  [Cwhite]{} ;
\node (i1) at (0,9)  [Cwhite] {};
\node (i2) at (1,0    ) [Cwhite]  {};

 \foreach \from/\to in {i/i0,i/i1,i/i2}
  \draw[black] (\from) -- (\to);

\node (j) at (3,1)  [Cblack3]  {};
\node (j0) at (8,6)  [Cwhite]{} ;
\node (j1) at (0,1)  [Cwhite] {};
\node (j2) at (3,0    ) [Cwhite]  {};

 \foreach \from/\to in {j/j0,j/j1,j/j2}
  \draw[black] (\from) -- (\to);

\node (l) at (4,3) [Cblack3]  {};
\node (l0) at (8,7)  [Cwhite]{} ;
\node (l1) at (0,3)  [Cwhite] {};
\node (l2) at (4,0    ) [Cwhite]  {};

 \foreach \from/\to in {l/l0,l/l1,l/l2}
  \draw[black] (\from) -- (\to);

\node (l) at (5,5)  [Cblack3]  {};
\node (l0) at (8,8)  [Cwhite]{} ;
\node (l1) at (0,5)  [Cwhite] {};
\node (l2) at (5,0    ) [Cwhite]  {};

 \foreach \from/\to in {l/l0,l/l1,l/l2}
  \draw[black] (\from) -- (\to);
  
  \node (l) at (6,7)  [Cblack3]  {};
\node (l0) at (8,9)  [Cwhite]{} ;
\node (l1) at (0,7)  [Cwhite] {};
\node (l2) at (6,0    ) [Cwhite]  {};

 \foreach \from/\to in {l/l0,l/l1,l/l2}
  \draw[black] (\from) -- (\to);

\node (w1) at (16.3,11) [Cwhite]  {$1$};
\node (w1) at (16.3,5) [Cwhite]  {$4$};
\node (w1) at (16.3,1) [Cwhite]  {$6$};
\node (w1) at (16.3,9) [Cwhite]  {$2$};
\node (w1) at (16.3,7) [Cwhite]  {$3$};
\node (w1) at (16.3,3) [Cwhite]  {$5$};

\node (k) at (18.6,9)  [Cblack3] {};
\node (k0) at (22.6,12.86)  [Cwhite]{} ;
\node (k1) at (16.6,9)  [Cwhite] {};
\node (k2) at (18.6,0) [Cwhite]  {};

 \foreach \from/\to in {k/k0,k/k1,k/k2}
  \draw[black] (\from) -- (\to);
  
\node (i) at (17.6,7)  [Cblack3] {};
\node (i0) at (22.6,11.49)  [Cwhite]{} ;
\node (i1) at (16.6,7)  [Cwhite] {};
\node (i2) at (17.6,0    ) [Cwhite]  {};

 \foreach \from/\to in {i/i0,i/i1,i/i2}
  \draw[black] (\from) -- (\to);

\node (j) at (19.6,11)  [Cblack3]  {};
\node (j0) at (22.6,14)  [Cwhite]{} ;
\node (j1) at (16.6,11)  [Cwhite] {};
\node (j2) at (19.6,0  ) [Cwhite]  {};

 \foreach \from/\to in {j/j0,j/j1,j/j2}
  \draw[black] (\from) -- (\to);

\node (l) at (20.6,1) [Cblack3]  {};
\node (l0) at (24.6,5)  [Cwhite]{} ;
\node (l1) at (16.6,1)  [Cwhite] {};
\node (l2) at (20.6,0  ) [Cwhite]  {};

 \foreach \from/\to in {l/l0,l/l1,l/l2}
  \draw[black] (\from) -- (\to);

\node (l) at (21.6,3)  [Cblack3]  {};
\node (l0) at (24.6,6)  [Cwhite]{} ;
\node (l1) at (16.6,3)  [Cwhite] {};
\node (l2) at (21.6,0  ) [Cwhite]  {};

 \foreach \from/\to in {l/l0,l/l1,l/l2}
  \draw[black] (\from) -- (\to);
  
  \node (l) at (22.6,5)  [Cblack3]  {};
\node (l0) at (24.6,7)  [Cwhite]{} ;
\node (l1) at (16.6,5)  [Cwhite] {};
\node (l2) at (22.6,0   ) [Cwhite]  {};

 \foreach \from/\to in {l/l0,l/l1,l/l2}
  \draw[black] (\from) -- (\to);

  \end{tikzpicture}

 \end{center}\caption{On the left a tropical hyperplane arrangement yielding the diagonal matching field of size $3 \times 6$, in the middle an arrangement yielding the block diagonal matching field $\BLambda_{2,4}$ and on the right a block diagonal matching field $\BLambda_{3,3}$. See Section \ref{sec:block} for the definition of a block diagonal matching field.} \label{fig:diag} 
   \end{figure}
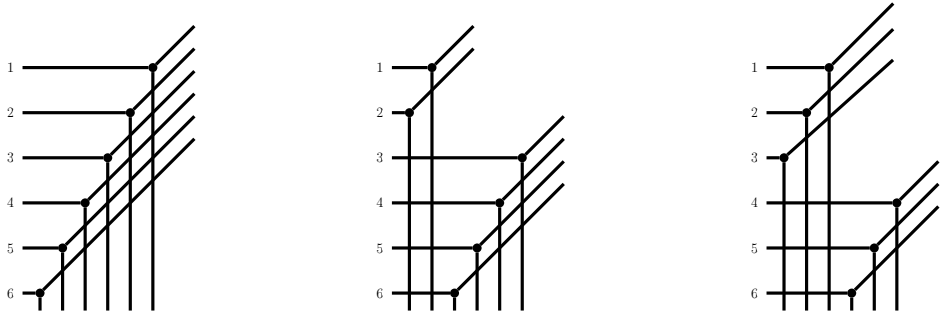

\begin{Example}[Diagonal matching field]
\label{exam:diag}
The associated hyperplane arrangement of the following weight matrix is depicted in Figure~\ref{fig:diag} (left).
\[
M=\begin{bmatrix}
    0  & 0  & 0  & 0  & 0  & 0  \\
      6& 5  & 4 & 3  & 2  & 1\\
       11 & 9 & 7 & 5 & 3  & 1 \\
\end{bmatrix}
\]

The initial terms are $x_{1i}x_{2j}x_{3k}$ for $1\leq i<j<k\leq 6$.  The following $\Lambda$-tableaux indicate the row-wise equal tableaux which give all the binomial relations in $\I_{\Lambda}$ of multi-degree $e_J$ for $|J|=6$, 
$$
\begin{array}{c}1  \\ 3 \\5\end{array} 
\begin{array}{c}2 \\ 4 \\6\end{array}= 
\begin{array}{c}1  \\3 \\6\end{array}
\begin{array}{c}2 \\4 \\5\end{array} = 
\begin{array}{c}1  \\4 \\5\end{array} 
\begin{array}{c}2 \\3 \\6\end{array} = 
\begin{array}{c}1  \\4 \\6\end{array} 
\begin{array}{c} 2\\3 \\5\end{array} , \quad 
\begin{array}{c}1 \\2 \\5 \end{array} 
\begin{array}{c}3 \\4 \\6\end{array}= 
\begin{array}{c}1 \\2 \\6\end{array} 
\begin{array}{c}3 \\4 \\5\end{array},\quad 
\begin{array}{c}1  \\3 \\4\end{array} 
\begin{array}{c}2 \\5 \\6\end{array} = 
\begin{array}{c}1  \\5 \\6\end{array} 
\begin{array}{c}2 \\3 \\4\end{array}.
$$

Notice that in each of the equivalence classes of the quadratic monomials listed above the first monomial listed is a semi-standard tableaux, i.e., all rows are in weakly increasing order and the columns are strictly increasing and it is the only semi-standard tableaux of that equivalence class. 
The other quadratic terms of multi-degree $(1, \dots, 1)$ are
$$
 \begin{array}{c}1 \\2 \\3\end{array} 
\begin{array}{c}4 \\5 \\6\end{array} 
\qquad \text{and} \qquad
\begin{array}{c}1 \\2 \\4\end{array}
\begin{array}{c}3 \\5 \\6\end{array}.
$$
Notice that they are independent in $\mathbb{K}[P_I]$. 

\end{Example}

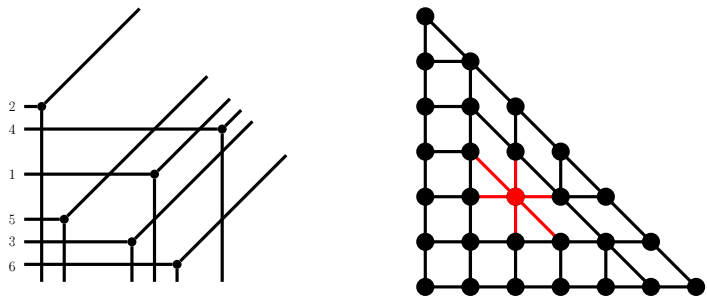
\begin{figure}
 \begin{center}
\begin{tikzpicture} [scale = .3, very thick = 1mm, every node/.style={inner sep=0,outer sep=0}]

\node (w1) at (-.3,.93) [Cwhite]  {$6$};
\node (w1) at (-.3,2) [Cwhite]  {$3$};
\node (w1) at (-.3,3) [Cwhite]  {$5$};
\node (w1) at (-.3,5) [Cwhite]  {$1$};
\node (w1) at (-.3,7) [Cwhite]  {$4$};
\node (w1) at (-.3,8) [Cwhite]  {$2$};

\node (k) at (1,8)  [Cblack3] {};
\node (k0) at (5.5,12.5)  [Cwhite]{} ;
\node (k1) at (0,8)  [Cwhite] {};
\node (k2) at (1,0) [Cwhite]  {};

 \foreach \from/\to in {k/k0,k/k1,k/k2}
  \draw[black] (\from) -- (\to);
  
\node (i) at (2,3)  [Cblack3] {};
\node (i0) at (8.5,9.5)  [Cwhite]{} ;
\node (i1) at (0,3)  [Cwhite] {};
\node (i2) at (2,0    ) [Cwhite]  {};

 \foreach \from/\to in {i/i0,i/i1,i/i2}
  \draw[black] (\from) -- (\to);

\node (j) at (5,2)  [Cblack3]  {};
\node (j0) at (10.5,7.5)  [Cwhite]{} ;
\node (j1) at (0,2)  [Cwhite] {};
\node (j2) at (5,0    ) [Cwhite]  {};

 \foreach \from/\to in {j/j0,j/j1,j/j2}
  \draw[black] (\from) -- (\to);

\node (l) at (9,7) [Cblack3]  {};
\node (l0) at (10,8)  [Cwhite]{} ;
\node (l1) at (0,7)  [Cwhite] {};
\node (l2) at (9,0    ) [Cwhite]  {};

 \foreach \from/\to in {l/l0,l/l1,l/l2}
  \draw[black] (\from) -- (\to);

\node (l) at (6,5)  [Cblack3]  {};
\node (l0) at (9.5,8.5)  [Cwhite]{} ;
\node (l1) at (0,5)  [Cwhite] {};
\node (l2) at (6,0    ) [Cwhite]  {};

 \foreach \from/\to in {l/l0,l/l1,l/l2}
  \draw[black] (\from) -- (\to);

\node (l) at (7,1) [Cblack3]  {};
\node (l0) at (12,6)  [Cwhite]{} ;
\node (l1) at (0,1)  [Cwhite] {};
\node (l2) at (7,0    ) [Cwhite]  {};

 \foreach \from/\to in {l/l0,l/l1,l/l2}
  \draw[black] (\from) -- (\to);

\draw[red](4 + 18,2) -- (4+18,6);    
\draw[red] (2+ 18,4) -- (6+18,4); 
\draw[red] (6 + 18,2) -- (2+18,6); 

 \foreach \x in {0, 1,..., 6}{
  	\foreach \y in {0, ...,\x}{
 		\node[draw,circle,inner sep=2pt,fill, black] at (12 - 2*\x + 18, 2*(\y ) {};
	}
}	
    
\draw[black] (0 + 18,0) -- (12 + 18,0);    
\draw[black] (0 + 18,0) -- (18,12);            
\draw[black] (12 + 18,0) -- (18,12);            

\draw[black] (2+ 18,0) -- (20,10);     
\draw[black] (2+ 18,2) -- (28,2);     

\draw[black] (2 + 18,8) -- (10 + 18,0);    

\draw[black] (8+ 18,4) -- (6+18,4); 
\draw[black] (6+ 18,6) -- (6+18,4);
 
\draw[black](4 + 18,8) -- (4+18,6);  

 \foreach \x in {1,2,..., 4}{
  \draw[black] (2*\x + 18, 0) -- (2*\x + 18, 2) ;}
  
   \foreach \y in {1,2,..., 5}{
  \draw[black] (18, 2*\y) -- (2 + 18, 2*\y) ;}

\node[draw,circle,inner sep=2pt,fill, red] at (2*2 + 18, 2*2) {};

\end{tikzpicture}\caption{\label{fig:badhex} The tropical hyperplane arrangement from the weight matrix in Example \ref{exam:hexagon} on the left and its dual regular subdivision on the $2$ dimensional simplex of size $6$.}
\end{center}
\end{figure}

\begin{Example}[Hexagonal matching field]\label{exam:hexagon}
The associated hyperplane arrangement of the following weight matrix

\[
M=\begin{bmatrix}
    0  & 0 & 0  & 0  & 0 & 0  \\
     6 & 1  & 5  & 9 & 2 &  7 \\
       5  & 8 & 2  & 7  & 3  & 1 \\
\end{bmatrix}
\]
is depicted in Figure~\ref{fig:badhex} (left). The initial terms of the Pl\"ucker forms are:
\[
  123, 421,   125,  126,  413,  153,  136,   451, 416, 156,  423,  523, 326, 425, 426, \]
  \[
  526,   453,   436,   356,  \text{ and }  456,
   \]
where by $ijk$ we mean $x_{1i}x_{2j}x_{3k}$.  
The following $\Lambda$-tableaux indicate the row-wise equal tableaux which give all the binomial relations in $\I_{\Lambda}$ of the form $e_J$ for $|J|=6$. 
$$\begin{array}{c}1 \\2 \\3\end{array} 
\begin{array}{c}4 \\5 \\6\end{array} = 
\begin{array}{c}4 \\2 \\3\end{array} 
\begin{array}{c}1 \\5 \\6\end{array}=
\begin{array}{c}1 \\5 \\3\end{array} 
\begin{array}{c}4 \\2 \\6\end{array}= 
\begin{array}{c}1 \\2 \\6\end{array} 
\begin{array}{c}4 \\5 \\3\end{array},\quad 
\begin{array}{c}5  \\2 \\3\end{array} 
\begin{array}{c}4 \\1 \\6\end{array} = 
\begin{array}{c}5  \\2 \\6\end{array} 
\begin{array}{c}4 \\1 \\3\end{array},
$$
$$
\begin{array}{c}4  \\2 \\5\end{array} 
\begin{array}{c}1 \\3 \\6\end{array} = 
\begin{array}{c}4  \\3 \\6\end{array} 
\begin{array}{c}1 \\2 \\5\end{array},\quad 
\begin{array}{c}4  \\2 \\1\end{array} 
\begin{array}{c}3 \\5 \\6\end{array} = 
\begin{array}{c}4  \\5 \\1\end{array} 
\begin{array}{c}3 \\2 \\6\end{array}.$$
\end{Example}
Notice that compared with Example \ref{exam:diag}, there is an additional binomial relation listed.

\begin{Example}
The matching field ideal in Example~\ref{exam:diag}, is generated by 35 binomials and it is equal to the initial ideal of $\I_{3,n}$.
However, in Example~\ref{exam:hexagon}, the ideal $\I_\Lambda$ is generated by 36 binomials. More precisely, the relation $P_{523}P_{416}-P_{526}P_{413}$ is in $\I_\Lambda$, but not in the initial ideal $\ini_{w_M}(\I_{3,n})$.
\end{Example}

\begin{Lemma}\label{lem:222}
Let $M$ be a $3 \times n$ weight matrix providing a monomial degeneration of the Pl\"ucker forms. 
Then for any size $6$ subset $J \subset [n]$, the $(2, 2, 2)$-cell of $\mathcal{A}|_J$ determines
 the initial terms of exactly $8$ Pl\"ucker forms. 
 Moreover, these $8$ initial terms come in $4$ pairs  which produce 
 quadratic relations in $\I_{\Lambda}$.

\end{Lemma}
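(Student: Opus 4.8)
The plan is to work in the plane $\mathbb{R}^2$ (as $k=3$) and to reduce the whole statement to the combinatorics of covectors. First I would fix a point $x$ in the interior of the $(2,2,2)$-cell of $\mathcal{A}|_J$ and write its covector as $(S_1,S_2,S_3)$, so that $(|S_1|,|S_2|,|S_3|)=(2,2,2)$ by hypothesis. The key preliminary observation, which follows from the covector formalism of \cite{DevSturm, fink2015stiefel}, is that from a generic point each of the six tropical lines is met by exactly one of the three rays $x+tv_1,\,x+tv_2,\,x+tv_3$ (with $v_1=(1,1)$, $v_2=(-1,0)$, $v_3=(0,-1)$): which ray is determined by which of the three sectors of that line contains $x$. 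Hence $S_1,S_2,S_3$ are pairwise disjoint with $|S_1|+|S_2|+|S_3|=6$, and so the coarse covector $(2,2,2)$ forces $\{S_1,S_2,S_3\}$ to be an ordered partition of $J$ into three blocks of size $2$.

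Next I would identify the Pl\"ucker forms whose initial term is read off from this cell. For a triple $I\subset J$ the covector of $x$ with respect to the restricted arrangement $\mathcal{A}|_I$ is obtained simply by intersecting each $S_\ell$ with $I$, namely $(S_1\cap I,\,S_2\cap I,\,S_3\cap I)$. Thus $x$ lies in a cell of coarse covector $(1,1,1)$ for $\mathcal{A}|_I$ exactly when $I$ meets each block $S_\ell$ in a single element. Because $M$ gives a monomial degeneration, every triple of lines intersects properly, so Proposition~\ref{prop:initialPlucker} applies and guarantees a \emph{unique} $(1,1,1)$-cell for $\mathcal{A}|_I$; this cell must be the one containing $x$, whence $\ini_M(P_I)=\pm\, x_{1i_1}x_{2i_2}x_{3i_3}$ where $\{i_\ell\}=S_\ell\cap I$. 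The transversal triples number $|S_1|\,|S_2|\,|S_3|=2\cdot2\cdot2=8$, while every other triple has a restricted covector different from $(1,1,1)$ and hence its initial term is governed by a different cell. This gives the claimed count of exactly $8$ Pl\"ucker forms.

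Finally I would produce the pairing and the relations. For a transversal triple $I$ its complement $I^c=J\setminus I$ is again transversal, selecting the opposite element of each block, so $I\mapsto I^c$ partitions the $8$ triples into $4$ complementary pairs. The $3\times 2$ tableau representing the product $P_I\,P_{I^c}$ has its $\ell$-th row equal to the full block $S_\ell$, and this holds independently of the chosen pair. Consequently the four products are pairwise row-wise equal, so by the defining relations of $\I_\Lambda$ each difference $\epsilon\,P_I P_{I^c}-\epsilon'\,P_{I'}P_{I'^c}$ is a quadratic binomial lying in $\I_\Lambda$; these are the asserted quadratic relations, and the diagonal and hexagonal arrangements of Examples~\ref{exam:diag} and \ref{exam:hexagon} exhibit exactly this pattern.

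The counting ($2\cdot2\cdot2=8$ and $8/2=4$) and the row-wise bookkeeping are routine. The step I expect to be the real obstacle is the first one: verifying that the covector of the $(2,2,2)$-cell is genuinely an ordered set-partition of $J$ into pairs and that it restricts compatibly to each transversal triple, so that Proposition~\ref{prop:initialPlucker} can be invoked with the block elements playing the role of $(c_1,c_2,c_3)$. This is precisely where the sector geometry of a tropical line and the proper-intersection hypothesis enter; the remainder is a formal consequence.
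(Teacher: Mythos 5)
Your proof is correct and follows essentially the same route as the paper's: both read off the covector $(S_1,S_2,S_3)$ of the $(2,2,2)$-cell, apply Proposition \ref{prop:initialPlucker} to the $2\cdot 2\cdot 2=8$ transversal triples, and pair each transversal with its complement in $J$ to get four row-wise equal products, hence the quadratic binomials in $\I_{\Lambda}$. The additional details you supply (disjointness of the blocks and compatibility of covectors under restriction to $\mathcal{A}|_I$) are precisely what the paper leaves implicit when it invokes the proposition directly.
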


\begin{proof}
Suppose that the  covector of the $(2, 2, 2)$-cell  is $(S_1,S_2, S_3)$
where $|S_i| = 2$ for all $i$.  Then choosing  $s_i \in S_i$ for $i = 1, 2, 3$ we obtain $\ini_M(P_{s_1s_2s_3}) =  \pm x_{1s_1}x_{2s_2}x_{3s_3}$ 
by Proposition \ref{prop:initialPlucker}.  Therefore the first claim follows. 

For simplicity we can assume that $J = \{1, \dots, 6\}$ and that $$(S_1, S_2, S_3) = (\{1, 4\}, \{2, 5\}, \{3, 6\}),$$ as in the case for the hexagon cell in Figure~\ref{fig:badhex}. Then the following  $\Lambda$-tableaux are all row-wise equal and give rise to 3 binomial relations in $\I_{\Lambda}$, 
$$\begin{array}{c}1 \\2 \\3\end{array} 
\begin{array}{c}4 \\5 \\6\end{array} = 
\begin{array}{c}4 \\2 \\3\end{array} 
\begin{array}{c}1 \\5 \\6\end{array}=
\begin{array}{c}1 \\5 \\3\end{array} 
\begin{array}{c}4 \\2 \\6\end{array}= 
\begin{array}{c}1 \\2 \\6\end{array} 
\begin{array}{c}4 \\5 \\3\end{array}.
$$
This completes the proof.
\end{proof}

\begin{proof}[Proof of Proposition~\ref{prop:deg2hex}.] 
Suppose we have an arrangement of $6$ tropical lines in $\mathbb{R}^2$.
The cell  which has coarse covector  $(2, 2, 2)$ is a hexagon if and only if the edges with endpoints  $(2, 2, 2) - (\sigma(1), \sigma(2), \sigma(3))$ for all $\sigma \in S_3$  are present in the 
dual subdivision of $6\Delta_2$. See the right hand side of Figure \ref{fig:badhex}.

Suppose without loss of generality that the covector of the hexagon cell is $(\{1,4\},\{2, 5\},$ $\{3, 6\})$, as it is for example in Figure~\ref{fig:badhex}.
If the cell dual to $(2, 2, 2)$ is a hexagon, then up to the appropriate labeling there are covectors, 
$$(\{1,4\}, \{2\}, \{3, 5, 6\}) \qquad\text{and}\qquad (\{1,4\}, \{ 2, 3, 5\}, \{6\}). $$
From this pair of covectors we obtain the quadratic relation $$\begin{array}{c}1 \\2 \\5\end{array} 
\begin{array}{c}4 \\3 \\6\end{array} = 
\begin{array}{c}4 \\2 \\5\end{array} 
\begin{array}{c}1 \\3 \\6\end{array}
$$
in the ideal $\I_{\Lambda}$. 

There are 4 other lattice points that are endpoints of the 6 segments of the subdivision dual to the hexagon. 
They come in 2 pairs formed by the points which are on the same line. For each of these pairs we obtain a new 
 independent quadratic relation in $\I_{\Lambda}$ as above. 
 
Taking into account the quadratic relations from  Lemma \ref{lem:222}  as well, we can conclude that the dimension of the $e_1 + \dots +e_6$ graded piece of the coordinate ring of the toric variety $\I_{\Lambda}$ is at most $4$. However, the dimension of this piece for the coordinate ring of  $\I_{3, n}$  and 
hence also of $\ini_{w_M}(\I_{3, n})$  is $5$. This dimension is given by the number of semi-standard tableaux with content $\{1, 2, 3, 4, 5, 6\}$ of size $3 \times 2$. 

For the other direction, we again consider the multi-grading on the coordinate ring of $I_{\Lambda}$. The degree $2$ part of this coordinate ring has elements which have two types with respect to the multi-grading. They are either $e_J$ or  $e_J + e_i - e_j$ for $i, j \in J$ and $i \neq j $ where $J$ is a size $6$ subset. The dimension of the $e_J + e_i - e_j$ piece of the coordinate ring   is of the correct dimension for any $J$ and $i, j$. This follows from \cite[Corollary 4.4]{SpeyerSturmfels}.

{Finally, we consider  the terms with  $e_J$ multi-grading for a  $J$ with $|J| = 6$. If $\Lambda$ is not hexagonal, then the dual subdivision of a tropical hyperplane arrangement inducing $\Lambda$ must be missing one of the possible edges with endpoint $(2, 2) \in 6 \Delta$. Proceeding case by case, we can verify the statement of the proposition.  }
\end{proof}

\begin{proof}[Proof of Theorem \ref{Theorem:iff}]

Assume that  $\I_\Lambda $ is quadratically generated. If the matching field $\Lambda$ is hexagonal, then by Proposition \ref{prop:deg2hex}  there is a size $6$ subset $J \subset [n]$ such that $(\I_{\Lambda})_2 \neq (\ini_{w_M}(\I_{3,n}))_2$. 
However, by \cite[Theorem  11.4]{sturmfels1996grobner}, equality of the  ideals $\I_{\Lambda}$ and $\ini_{w_M}(\I_{3,n})$  is a necessary and sufficient condition for the Pl\"ucker forms to be a Khovanskii basis. This proves one direction.

For the other direction, 
we compare the ideals $\ini_{w_M}(\I)$ and $\I_\Lambda$, and then we complete the proof by applying Theorem \ref{Theorem:Stu}.
Since $\I_\Lambda$ is quadratically generated, there are  two types  of generators determined by their multi-degrees. There are generators of type $e_J$ or of type $e_J + e_i - e_j$ where $|J| = 6$ and $i, j \in J$. We show that the generators of $\I_{\Lambda}$ are included in $(\ini_{w_M}(\I_{3,n}))_2$ by considering each type. 

Firstly, the generators of type $e_J +e_i -e_j$ can be reduced to the case of $\Gr(2,5)$.
 In this case the statement holds since the Pl\"ucker forms are a Khovanskii basis with respect to any  coherent matching field $\Lambda' \colon \mathbf{I}_{2, 5} \to S_2$ \cite{SpeyerSturmfels}.

For generators of type $e_J$ where $|J| = 6$, we reduce to the situation of $\Gr(3, 6)$ and matching fields of the form $\Lambda' \colon \mathbf{I}_{3, 6} \to S_3$. 
Combining Proposition \ref{prop:deg2hex}  and 
 \cite[Theorem  11.4]{sturmfels1996grobner} shows that the Pl\"ucker forms are a Khovanskii basis with respect to any of these coherent matching fields. Therefore the subduction algorithm terminates with a constant for any generator of type $e_J$ when the restriction to $J$ is not hexagonal. By again applying   \cite[Corollary 11.5]{sturmfels1996grobner} we prove the other direction and our theorem. 
\end{proof}

Following Theorem \ref{Theorem:iff}, we are interested in determining when a $3\times n$ matching field ideal is quadratically generated.

\begin{Example}
The ideal of the diagonal matching field from Example \ref{ex:diagonal1} is quadratically generated,
see \cite[Theorem~14.16]{CCA}. 
\end{Example}

\begin{Example}\label{exa:non-coherent}
Consider the $2\times 6$ matching field $\Lambda$ that assigns the transposition $(1 2)$ for sets $I \in \{\{1, 4\}, \{2,3\}, \{3,6\}, \{4,5\}\}$ and the identity permutation otherwise. 
A minimal generator of the matching field ideal is 
$$\begin{array}{c}1 \\2\end{array} 
\begin{array}{c}3 \\4 \end{array}
\begin{array}{c}5 \\6 \end{array} 
= 
\begin{array}{c}1 \\6 \end{array} 
\begin{array}{c}3 \\2\end{array} 
\begin{array}{c}5 \\4 \end{array}.
$$
Therefore, this matching field ideal is not quadratically generated.

\end{Example}

\begin{Remark}The matching field of the non-quadratically generated ideal in Example~\ref{exa:non-coherent} is not coherent since it does not arise from a total ordering on the set $[n]$. Our smallest known examples of coherent matching fields whose ideals are not quadratically generated are of size $3 \times 8$ and were found via a random search. 
\end{Remark}

Before presenting the proof of Theorem \ref{Theorem:subhex} we introduce the notion of submatching field. 
Before  defining  hexagonal submatching fields,  recall the notion of a matching field  being pointed on a subset $S$ of $[n]$ from Example \ref{ex:pointed1}.

\begin{Definition}\label{def:submatching}
Given a matching field $\Lambda$ and two subsets $S \subset T \subset [n]$, the \emph{submatching field} $\Lambda|_{T-S,T}$ of $\Lambda$
is obtained by restricting $\Lambda$ to subsets $I$ of $[n]$ with $S\subset I\subset T$ and restricting the matching to $I \backslash S$.

The submatching field $\Lambda|_{T-S,T}$ is \emph{hexagonal} if it is a size $3 \times 6$ hexagonal matching field and  $\Lambda$ is pointed on $S \subset [n]$. 
\end{Definition}

\begin{proof}[Proof of Theorem  \ref{Theorem:subhex}]
{Let  $\Lambda|_{T-S,T}$  be a hexagonal  submatching field  of $\Lambda$. Consider the  graded piece of the Pl\"ucker algebra consisting of degree $2$ monomials in the variables $P_I$ such that $ S \subset I \subset T$. This vector space has the same dimension as the degree $2$ graded piece of $\mathcal{A}_{3, 6}$, and this is $5$ dimensional. However, the analogous graded piece of $\mathbb{K}[x_{ij}]/\mathcal{I}_{\Lambda}$ is only $4$ dimensional since  
 $\mathcal{I}_{\Lambda}$ consists of the list of  binomials from Example \ref{exam:hexagon}. 
The graded Hilbert functions of $\mathcal{A}_{k, n}$ and $\mathbb{K}[x_{ij}]/\mathcal{I}_{\Lambda}$ are not equal and therefore the matching field $\Lambda$ does not produce a toric degeneration of $\Gr(k,n)$.}   
\end{proof}

\section{Block diagonal matching fields}\label{sec:block}

In this section we describe a family of coherent matching fields of size $3 \times n$ whose toric ideals are generated in degree $2$ and therefore yield toric degenerations and Khovanskii bases of $\Gr(3,n)$.

Consider a sequence of positive numbers $ a_1, a_2, \dots,  a_{r}$ so that $\sum_{i = 1}^r a_i = n$. For $1 \leq s\leq r$ set 
$I_s = \{\alpha_{s-1}+1, \alpha_{s-1} +2, \dots, \alpha_{s}\}$,
where $\alpha_{s} = \sum_{i = 1}^{s} a_{i }$ and $\alpha_0 = 0$. 

\begin{Definition}\label{def:blockdiagcolumns}

The block diagonal matching field of size $3 \times n$  corresponding to a collection ${\bf a} = \{a_1, \dots, a_{r}\}$ satisfying  $\sum_{i = 1}^r a_i = n$ is denoted $\BLambda_{\bf{a}}$. This matching field is defined by:
\begin{enumerate}

\item $\BLambda_{\bf{a}}(I) = \id$ if $|I \cap I_s| \geq 2$ where $s$ is the minimal $t$ such that $I_t \cap I \neq \emptyset$;

\item \label{transpose} $\BLambda_{\bf{a}}(I) = (12)$ if $|I \cap I_s| = 1$ where $s$ is the minimal $t$ such that $I_t \cap I \neq \emptyset$. 

 \end{enumerate}
 A $2$-block diagonal matching field is a block diagonal matching field with $r = 2$. 
\end{Definition}

\begin{Example}
Consider the case when $a_1 = 1 $ and  $a_2 = n-1$. Then  $I_1 = \{1\}$ and $I_2 = \{2, \dots , n\}$.  
Then $\BLambda_{1, n-1}(I) = \text{id}$ if and only if $I \subset I_2$. Otherwise, we have $1 \in I$ and $1$ appears in the second row of the table.
The matching field  $\BLambda_{1,n-1}$ is isomorphic to a pointed matching field $\Lambda$. This isomorphism is given by acting on $[n]$ by the transposition $(12)$. 
In fact, the $\Lambda$-tableaux are then the PBW-tableaux from \cite{Feigen}. \end{Example}

\begin{Remark}\label{rem:higherblock}
Block diagonal matching fields can be generalised to size $k \times n$. The ideals of all $2$-block diagonal matching fields are also  quadratically generated. This can be proved in the same way as Theorem \ref{Theorem:2block}.  However, we  cannot prove analogues of Corollary \ref{cor:blockdiag} for Grassmannians $\Gr (k, n)$ for $k >3$ since in these cases  quadratic generation of the initial ideals does not directly imply that the initial degeneration is toric. 
\end{Remark}

 In general  there is a $\mathbb{Z}^4$  grading  given by the number of elements of type $I_1$ in different rows of a tableau. A $3\times d$ tableau $T$ is of degree $(\alpha, \beta,\gamma, d-\alpha-\beta-\gamma)$ 
 where  
\begin{itemize}
\item $\alpha=|\{\text{Content of row 3 of } T\}\cap I_1| = |\{I\in T: \ |I \cap I_1| = 3\}|$ 
\item $\beta=|\{\text{Content of row 1 of } T\}\cap I_1|-\alpha = |\{I\in T: \ |I \cap I_1| = 2\}|$ 
\item $\gamma=|\{\text{Content of row 2 of } T\}\cap I_1|-\alpha-\beta = |\{I\in T: \ |I \cap I_1| = 1\}|$ 
\end{itemize}
For two $\Lambda$-tableaux $T,T'$ which are row-wise equal, these numbers are equal. This implies the following lemma.
  
 \medskip
 
\begin{Lemma}\label{lem:homo}
The ideal of a block diagonal matching field
has a $\mathbb{Z}^4$ grading given by $(\alpha, \beta,\gamma,  d-\alpha-\beta-\gamma)$
from above. 
 \end{Lemma}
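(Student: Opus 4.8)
The plan is to show that the four integers $(\alpha,\beta,\gamma,d-\alpha-\beta-\gamma)$ attached to a $\Lambda$-tableau $T$ are invariants of the row-wise content of $T$, so that any two row-wise equal tableaux share the same grading. Since $\I_{\BLambda_{\bf a}}$ is generated by binomials $\epsilon_A \Pi_{I\in A}P_I - \epsilon_B\Pi_{J\in B}P_J$ coming from pairs of row-wise equal tableaux, establishing this invariance immediately shows every generator is homogeneous with respect to the $\mathbb{Z}^4$ grading, which is exactly the assertion of the lemma.

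First I would unpack the three defining quantities in terms of the block $I_1$. The key observation is that for a single column $I$ (a size-$3$ subset, i.e.\ a $\Lambda$-tableau of size $3\times 1$), the defining rules of $\BLambda_{\bf a}$ in Definition \ref{def:blockdiagcolumns} completely control where the elements of $I\cap I_1$ land. If $|I\cap I_1|=3$ then $I\subset I_1$, the permutation is the identity, and all three entries of $I_1$ occupy rows $1,2,3$; in particular the smallest sits in row $1$, and exactly one element of $I_1$ appears in row $3$. If $|I\cap I_1|=2$, again $s=1$ is the minimal block meeting $I$, so the permutation is the identity and the two smallest entries (both in $I_1$) fill rows $1$ and $2$. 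If $|I\cap I_1|=1$, then the single element of $I_1$ is the minimal element of $I$, the permutation is $(12)$, and this element is moved to row $2$. If $|I\cap I_1|=0$ the column contributes nothing to $I_1$. Tabulating this, for a single column the number of $I_1$-elements appearing in row $3$ is $1$ precisely when $|I\cap I_1|=3$ and $0$ otherwise; the number appearing in row $1$ is $1$ when $|I\cap I_1|\in\{2,3\}$ and $0$ otherwise; and the number appearing in row $2$ is $1$ when $|I\cap I_1|\in\{1,3\}$ and $0$ otherwise.

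Summing these per-column counts over all columns of $T$ gives exactly the three bulleted identities in the excerpt: $\alpha$ equals the number of $I_1$-elements in row $3$, $\alpha+\beta$ the number in row $1$, and $\alpha+\beta+\gamma$ the number in row $2$. Hence $\alpha$, $\beta$, and $\gamma$ are each determined by the multiset of entries appearing in the three rows of $T$ intersected with $I_1$, i.e.\ by the row-wise content of $T$. Consequently, if $T$ and $T'$ are row-wise equal then they have identical row contents, so their $(\alpha,\beta,\gamma)$ triples agree, and since both have the same number of columns $d$ the fourth coordinate agrees as well. Therefore each generating binomial is $\mathbb{Z}^4$-homogeneous, and the ideal is graded as claimed.

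I expect the main obstacle to be the bookkeeping in the per-column case analysis: one must check that the two defining rules of $\BLambda_{\bf a}$ genuinely force the claimed row of each $I_1$-element, and in particular verify the boundary case $|I\cap I_1|=3$ where the identity permutation places one $I_1$-element in row $3$ and thereby contributes to all three counts. Once this local dictionary between $|I\cap I_1|$ and the occupied rows is pinned down, the global statement follows by additivity over columns, so the conceptual content lies entirely in the single-column analysis and the rest is summation.
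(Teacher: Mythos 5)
Your strategy is exactly the paper's: the paper asserts without proof the double description of $(\alpha,\beta,\gamma)$ (via row contents intersected with $I_1$ and via column types) and then concludes that row-wise equal tableaux have equal degree; you supply the per-column verification of that dictionary, which is precisely the missing detail. However, your summary tabulation contains a slip that contradicts your own case analysis. You state that a column contributes an $I_1$-element to row $2$ only when $|I\cap I_1|\in\{1,3\}$, whereas your treatment of the case $|I\cap I_1|=2$ correctly says that the two $I_1$-elements occupy rows $1$ \emph{and} $2$ (the identity permutation puts $i_1<i_2$, both in $I_1$, into rows $1$ and $2$). The correct dictionary is: row $3$ receives an $I_1$-element iff $|I\cap I_1|=3$, row $1$ iff $|I\cap I_1|\in\{2,3\}$, and row $2$ iff $|I\cap I_1|\in\{1,2,3\}$. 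With your tabulation as written, summing over columns gives $|\{\text{content of row }2\}\cap I_1|=\alpha+\gamma$ rather than $\alpha+\beta+\gamma$, so the third bulleted identity you claim to recover would fail: a single column with $|I\cap I_1|=2$ has $\gamma=0$, yet it places an $I_1$-element in row $2$ while your dictionary says it places none there. The fix is immediate --- use the row-$2$ rule from your own case analysis --- and with it the rest of the argument (each of $\alpha,\beta,\gamma$ is determined by the row contents, both tableaux in a generating binomial have equal row contents, hence every generator is $\mathbb{Z}^4$-homogeneous) is correct and complete, matching the paper's intended proof.
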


\begin{proof}[Proof of Theorem \ref{Theorem:2block}]

Consider a binomial relation obtained from two $\Lambda$-tableaux  $T$ and $T'$ of size $3 \times d$ where $d >2$  whose contents are row-wise equal. By applying quadratic changes to the tableaux (changes involving only two columns) we will reduce the degree of this relation thus proving that the matching field ideal is quadratically generated. 

Given a $\Lambda$-tableau $T$, arrange the columns so that the first columns are those for which the matching field assigns the identity permutation and to the last column the matching field assigns the transposition $(12)$. Let $C$ denote the subtableau formed by the first columns and let $D$ denote the subtableau formed by the last columns. 

The tableaux $C$ and $D$ can each be put into semi-standard format. 
In other words, we can rearrange both $C$ and $D$ so that all  rows are in weakly increasing order, and the columns of $C$ are strictly increasing, whereas the columns of $D$ are arranged so that the first and second entries are permuted from the diagonal order. 

Now given  a binomial relation obtained from two $\Lambda$-tableaux  $T$ and $T'$. We assume that $T$ (respectively $T'$) is organized as a  pair of subtableaux $C$, $D$ (respectively  $C'$, $D'$) satisfying the requirements described above. 
 If the first columns of $T$ and $T'$ are equal, then we can cancel them from the binomial relation and it is not a minimal generator.  
 We let $I$ and $I'$ denote the first columns of $T$ and $T'$, respectively. 
Otherwise by Lemma \ref{lem:homo} the matching field relations are homogeneous with respect to the $\mathbb{Z}^4$ grading and so $| I \cap I_1 | = |I' \cap I_1|$.

Case 1: Suppose that $| I \cap I_1 | = |I' \cap I_1| = 3$. In this case, the columns $I$ and $I'$ could only differ in the second row. Suppose the  entries of the second row of $I$ and $I'$ are $j $ and $j'$, respectively. We can also assume that  $j <j'$. 
Then there must be a $j$ in the second row of the tableaux $D'$ since the contents are row-wise equal and $C'$ is in weakly increasing order. Then swap the positions of $j$ and $j'$ in the second row of $T'$ so that the first columns of $T$ and $T'$ now agree. Notice that we can exchange the position of  $j$ with that of $j'$ since $j, j' \in I_1$ and $j'$ was originally in the second row  of a  column whose the first and third entries were in $I_2$.  

Case 2: Suppose that $| I \cap I_1 | = |I' \cap I_1| =2$.  In this case, the columns $I$ and $I'$  may only differ in the second and third rows but not in the first. Assume the column $I$  is $i, j, r$ and the column  $I'$ is  $i, j', r'$. If $j <j'$ then just as above there must be a $j$ in the second row of $D'$. We have that $j <r'$ since $j$ is in block $1$ and $r'$ is in block $2$, so   we can swap the positions of $j $ and $j'$ in the second row of $T'$. 

Assume now that $j = j'$, without loss of generality we can suppose that $r<r'$. Then there is an $r$ in the third row of $D'$. Suppose that the column containing $r$ is $s, t, r$. Then we can swap the positions  of $r$ and $r'$ since $t<r<r'$ and  we can place $r$ in the last row. Now the two first terms are equal and hence, the binomial is not a minimal generator.

Case 3: Suppose that $| I \cap I_1 | = |I' \cap I_1| = 1$.  In this case, the tableaux $C$ and $C'$ are empty. Then the first two columns must be equal since $T = D$ and $T'= D'$ and they are both in (transposed) semi-standard form.

Case 4: Suppose that $| I \cap I_1 | = |I' \cap I_1| =0$. In this case, the entries of $I$ and $I'$ can only differ in the first and third row. Suppose that the column $I$ is $r, s, t$ and that the column $I'$ is $r', s, t'$.  
Therefore $r, r'<s<t, t'$ and we can assume that $r' <r$. 
Then there is a column in $T'$ with $r$ in the first row and we can  swap $r$ and $r'$ in the first row of $T'$. 
Thus we may assume that $r = r'$ and  without loss of generality that $t < t'$. Then there must be a $t$ somewhere in the last row of $D'$  and we can again  swap $t$ and $t'$ so that the columns are now equal. This completes the proof. 
\end{proof}

\section{Matching field polytopes}

From a  $k \times n$ matching field we can define a  polytope in $\mathbb{R}^{n \times k}$. We expect these polytopes to be of interest in geometric combinatorics. Let $e_{i,j}$ denote coordinates on $\mathbb{R}^{n \times k}$. Given a matching field $\Lambda$, for each $I \in \mathbf{I}_{k,n}$ we set $v_{I, \Lambda} := \sum_{i \in I} e_{i, \Lambda(I)(i)}$. 

\begin{Definition}
Given a   $k \times n$ matching field $\Lambda$ the matching field polytope $\Pi_ \Lambda$ is the convex hull of the set of points 
$\{ v_{I, \Lambda} \ | \ I \in \mathbf{I}_{k,n}  \} $  in $\mathbb{R}^{n \times k}$. 
\end{Definition}

\begin{Proposition}
If $\Lambda$ is a coherent matching field then $\Pi_{\Lambda}$ is the polytope of the toric variety defined by the binomial ideal $\mathcal{I}_{\Lambda}$. 
\end{Proposition}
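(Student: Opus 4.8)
The plan is to identify the toric variety $V(\mathcal{I}_\Lambda)$ with the projective toric variety naturally associated to the point configuration $\{v_{I,\Lambda} \mid I \in \mathbf{I}_{k,n}\}$, and then invoke the standard dictionary between lattice point configurations, their toric ideals, and the convex hull polytope. The key observation is that the monomial map $\phi_\Lambda$ of Equation~(\ref{eqn:monomialmap}) is, up to the signs $\mathrm{sgn}(\Lambda(I))$, precisely the map sending each Plücker variable $P_I$ to the monomial $\mathbf{x}^{v_{I,\Lambda}}$, where the exponent vector of $\mathbf{x}_{\Lambda(I)} = x_{\sigma(1)i_1}\cdots x_{\sigma(k)i_k}$ in the $x_{ij}$ is exactly $v_{I,\Lambda} = \sum_{i\in I} e_{i,\Lambda(I)(i)}$ read off the tableau. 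Since $\mathcal{I}_\Lambda = \ker \phi_\Lambda$ by the Proposition preceding Definition~\ref{def:coherent}, the ideal $\mathcal{I}_\Lambda$ is the toric ideal of this configuration.

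First I would make the exponent-vector identification explicit: for the variable $P_I$ with $\sigma = \Lambda(I)$, the monomial $\mathbf{x}_{\Lambda(I)}$ places the factor $x_{\sigma(r)\,i_r}$ for each $r$, so its multidegree in the variables $x_{ij}$ has a $1$ in position $(\sigma(r), i_r)$ and $0$ elsewhere; transposing indices to match the coordinates $e_{i,j}$ on $\mathbb{R}^{n\times k}$, this is exactly $\sum_{i\in I} e_{i,\Lambda(I)(i)} = v_{I,\Lambda}$. Thus $\phi_\Lambda(P_I) = \mathrm{sgn}(\Lambda(I))\,\mathbf{x}^{v_{I,\Lambda}}$. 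The signs $\mathrm{sgn}(\Lambda(I)) \in \{\pm 1\}$ do not affect the kernel-as-toric-ideal structure over a field: replacing $x_{ij}$ by the torus coordinates and absorbing $\pm 1$ into the torus gives a monomial map whose kernel is generated by binomials, and this kernel is the toric ideal $I_\mathcal{A}$ of the matrix $\mathcal{A}$ whose columns are the $v_{I,\Lambda}$. This matches the binomial generators already described around Equation~(\ref{eq:binomials}), the row-wise-equal condition being precisely the linear relations among the $v_{I,\Lambda}$.

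Next I would invoke coherence. The hypothesis that $\Lambda$ is coherent (Definition~\ref{def:coherent}) is what guarantees that the configuration $\{v_{I,\Lambda}\}$ genuinely arises as the initial exponents of the Plücker forms under a weight matrix $M$, and in particular that the toric variety $V(\mathcal{I}_\Lambda)$ is not merely an abstract affine scheme but is the normalization-free projective toric variety whose defining ideal is homogeneous with respect to the grading by $\mathbf{I}_{k,n}$. By the standard theory (e.g.\ as in \cite{sturmfels1996grobner}), for a homogeneous toric ideal $I_\mathcal{A}$ arising from a configuration lying on an affine hyperplane, the associated projective toric variety $\Spec \mathbb{K}[\mathcal{A}]$ has the convex hull $\con(\mathcal{A})$ as its polytope; here all points $v_{I,\Lambda}$ have coordinate sum $k$ (each contributes $k$ ones), so they lie on the hyperplane $\sum_{i,j} e_{i,j} = k$, confirming homogeneity and placing us squarely in the projectively normal/graded setting where the polytope of the toric variety is by definition $\con\{v_{I,\Lambda}\} = \Pi_\Lambda$.

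The main obstacle, and the step requiring the most care, is making precise the phrase ``the polytope of the toric variety'': I would need to state which toric-variety-to-polytope correspondence is meant (the polytope of the polarized projective toric variety $(X_\mathcal{A}, \mathcal{O}(1))$ determined by the configuration $\mathcal{A}$) and verify that $\mathcal{I}_\Lambda$ is indeed the homogeneous defining ideal of $X_\mathcal{A}$ in $\mathbb{P}^{\binom{n}{k}-1}$ rather than some sublattice or normalization thereof. Since the correspondence between $\mathcal{A}$, $I_\mathcal{A}$, and $\con(\mathcal{A})$ is functorial once $\mathcal{A}$ is fixed, and the signs are harmless, the argument reduces to the exponent-vector bookkeeping of the first paragraph together with the coherence-supplied homogeneity. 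I expect no deeper difficulty beyond this bookkeeping; coherence is exactly the hypothesis that removes any pathology and lets the general toric dictionary apply verbatim.
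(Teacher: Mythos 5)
Your proposal is correct. The paper in fact gives no proof of this proposition at all — it is stated as an immediate consequence of the definitions — and your argument is precisely the standard dictionary the authors implicitly rely on: the exponent vectors of the monomial map $\phi_\Lambda$ from Equation (\ref{eqn:monomialmap}) are (after transposing indices) exactly the points $v_{I,\Lambda}$, so $\mathcal{I}_\Lambda = \ker \phi_\Lambda$ is, up to the harmless diagonal change of coordinates absorbing the signs $\mathrm{sgn}(\Lambda(I))$, the toric ideal of that configuration, and since the configuration lies on the hyperplane where the coordinates sum to $k$, the polytope of the resulting projective toric variety is its convex hull $\Pi_\Lambda$. One small correction: coherence does no work in this argument — the homogeneity you need comes from each $v_{I,\Lambda}$ being a $0/1$ vector with exactly $k$ ones (which you yourself verify), not from the existence of a weight matrix, so the statement actually holds for an arbitrary matching field; the hypothesis in the paper is simply inherited from the setting the authors care about.
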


\begin{Corollary}
Let $\Lambda$ be  a coherent  $k \times n$ matching field then 
$$\frac{1}{[k(n-k)]!}\vol(\Pi_{\Lambda}) \leq  \deg \Gr(k, n).$$ 
\end{Corollary}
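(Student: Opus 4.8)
The plan is to route the comparison through the initial degeneration $\ini_{w_M}(\I_{k,n})$ of the Pl\"ucker ideal, where $M$ is a weight matrix inducing the coherent matching field $\Lambda$ and $w_M$ is the induced weight on the variables $P_I$. The engine is the containment
\[
\ini_{w_M}(\I_{k,n}) \subseteq \I_{\Lambda},
\]
which is the ``always holds'' half of the subalgebra-basis criterion behind Theorem \ref{Theorem:Stu}. To see it directly, take $F \in \I_{k,n}$, so $\psi(F) = 0$, and group the monomials of $F$ by their $w_M$-weight. Each $\psi(P^\alpha) = \prod_I (\det X_I)^{\alpha_I}$ has lowest-weight part $\prod_I \ini_M(\det X_I)^{\alpha_I} = \phi_\Lambda(P^\alpha)$, a single monomial of weight $w_M(\alpha)$, with all other terms of strictly higher weight. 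Hence the lowest-weight part of $\psi(F)$ is exactly $\phi_\Lambda(\ini_{w_M}(F))$, which must vanish; therefore $\ini_{w_M}(F) \in \ker \phi_\Lambda = \I_\Lambda$. (Equality is the Khovanskii-basis condition; the relation $P_{523}P_{416}-P_{526}P_{413}$ noted after Example \ref{exam:hexagon}, which lies in $\I_\Lambda$ but not in $\ini_{w_M}(\I_{3,n})$, shows the containment can be proper and fixes its direction.)

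Writing $S = \mathbb{K}[P_I]$, the containment yields a surjection of graded rings $S/\ini_{w_M}(\I_{k,n}) \twoheadrightarrow S/\I_\Lambda$, so the Hilbert function of $S/\I_\Lambda$ is termwise at most that of $S/\ini_{w_M}(\I_{k,n})$. Since passing to an initial ideal is a flat (Gr\"obner) degeneration, the latter equals the Hilbert function of $S/\I_{k,n} = \A_{k,n}$, whose projective spectrum is $\Gr(k,n)$ of dimension $k(n-k)$. From $\I_\Lambda \supseteq \ini_{w_M}(\I_{k,n})$ we also get $V(\I_\Lambda) \subseteq V(\ini_{w_M}(\I_{k,n}))$, hence $\dim V(\I_\Lambda) \le k(n-k)$. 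I would then split into two cases. If $\dim V(\I_\Lambda) = k(n-k)$, both Hilbert polynomials have degree $k(n-k)$, and the termwise inequality forces the leading coefficient of $S/\I_\Lambda$ to be at most that of $\A_{k,n}$, whence $\deg V(\I_\Lambda) = [k(n-k)]!\cdot\mathrm{lc}(S/\I_\Lambda) \le [k(n-k)]!\cdot \mathrm{lc}(\A_{k,n}) = \deg \Gr(k,n)$. If instead $\dim V(\I_\Lambda) < k(n-k)$, then $\Pi_\Lambda$ is not full-dimensional and its $k(n-k)$-dimensional volume vanishes, so the asserted inequality reduces to $0 \le \deg \Gr(k,n)$; thus the dimension equality need not be argued separately.

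It remains to translate the degree bound into the volume statement. By the Proposition preceding the Corollary, $V(\I_\Lambda)$ is the projective toric variety attached to the lattice polytope $\Pi_\Lambda = \con\{v_{I,\Lambda}\}$, and since every $v_{I,\Lambda}$ has coordinate sum $k$ the embedding is genuinely projective. By the standard dictionary between the degree of a projective toric variety and the (lattice-)normalized volume of its polytope (see e.g.\ \cite{sturmfels1996grobner}), $\deg V(\I_\Lambda)$ is the normalized volume of $\Pi_\Lambda$, that is, the quantity $\tfrac{1}{[k(n-k)]!}\vol(\Pi_\Lambda)$ in the normalization of the statement. Combining this with the previous paragraph gives $\tfrac{1}{[k(n-k)]!}\vol(\Pi_\Lambda) = \deg V(\I_\Lambda) \le \deg \Gr(k,n)$.

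The main obstacle, and the point to be most careful about, is the \emph{direction} of the containment $\ini_{w_M}(\I_{k,n}) \subseteq \I_\Lambda$ and of the consequent Hilbert-function inequality: reversing it lands on the opposite, false bound. The sign is pinned down by the hexagonal example, where $\I_\Lambda$ is strictly larger than the initial ideal, so $S/\I_\Lambda$ has the \emph{smaller} Hilbert function and hence the smaller degree. The only remaining care is the bookkeeping relating Euclidean volume, lattice-normalized volume, and degree, which is routine once the degree inequality $\deg V(\I_\Lambda) \le \deg \Gr(k,n)$ is established.
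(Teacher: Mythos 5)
Your proof is correct and follows essentially the route the paper intends: the paper states this Corollary without proof, as an immediate consequence of the preceding Proposition (identifying $\Pi_{\Lambda}$ with the polytope of the toric variety of $\I_{\Lambda}$), the one-sided containment $\ini_{w_M}(\I_{k,n}) \subseteq \I_{\Lambda}$ underlying Theorem \ref{Theorem:Stu}, and the standard degree--normalized-volume dictionary. Your writeup supplies exactly these steps (including the correct direction of the containment, confirmed by the hexagonal example, and the degenerate-dimension case), so it matches the paper's implicit argument.
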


Recall that the degree of the Grassmannian is given by the number of standard Young tableaux of shape $\lambda = (\lambda_1, \dots, \lambda_k)$ with $\lambda_i = n-k$ for all $i$. The number of standard Young tableaux is given by the hook-length formula 
$$
 \deg \Gr(k, n) =\frac{{({k}(n -{k}))!} \prod_{1 \le l \le {k}-1} l ! }{\prod_{1 \le l \le {k}} (n - l )! }.
$$
\begin{Example}
Let $\Lambda$ be the hexagonal matching field from Example \ref{exam:hexagon}. 
The matching field polytope $\Pi_{\Lambda}$ has Euclidean volume equal to $\frac{19}{181440}$ and normalised lattice volume equal to $38$.   Whereas the polytope of the diagonal matching field from Example \ref{exam:diag} has volume $\frac{1}{8640}$ and normalised lattice volume equal to $42$. 
The degree of the Grassmannian $(3, 6)$ under the Pl\"ucker embedding is $42$. 
Figure \ref{fig:tableofpolytopes} 
lists all of the cones of the tropical Grassmannian coming from matching fields.

\begin{figure}
\centering
 \begin{tabular}{|c ||c c c c  |} 
 \hline
Bounded $2$-cell  &  Cone of Gr(3,6) & $f$-vector of $\Pi_ \Lambda$ &  $\mathbb{Z}$-vol &
\\ [0.5ex] 
 \hline\hline
$\emptyset$  &  EEEE&  {does not arise from a matching field} & 42  & 
\\ 
\hline
 Triangle    & EEEG  & (20, 123, 386, 728, 882, 700, 358, 111, 18) & 42 &  
 \\ 
 \hline
Diagonal & EEFF(a)  & (20, 122, 372, 670, 766, 571, 276, 83, 14) & 42 &
\\
 \hline
 Parallelogram &  EEFF(b) & (20, 122, 376, 690, 807, 615, 302, 91, 15) & 42 & 
\\ 
\hline 
4-gon  &EEFG  &  (20, 122, 378, 701, 832, 645, 322, 98, 16)   & 42 & 
 \\ 
 \hline
Pentagon & EFFG &(20, 122, 376, 690, 807, 615, 302, 91, 15)  & 42 &
 \\
 \hline
Hexagon & FFFGG &
(20, 120, 361, 641, 720, 526, 250, 75, 13) & 38   &
 \\
 \hline
\end{tabular}
\caption{The $f$-vectors of the polytopes of the different possible initial  degenerations of the Pl\"ucker embedding of the Grassmannian $\Gr(3, 6)$. The description via the bounded $2$-dimensional cell in the tropical hyperplane arrangement is from the classification in \cite{herrmann2009draw}. The first row is a toric degeneration which does not arise from a monomial degeneration of the Pl\"ucker forms, hence it does not come from a matching field.   } \label{fig:tableofpolytopes} 
\end{figure}
\end{Example}

Four of the matching fields in the table  in Figure \ref{fig:tableofpolytopes} arise as $2$-block diagonal matching fields. Namely, the toric degeneration named ``diagonal" comes from diagonal matching field (as well as the isomorphic block diagonal matching field  $B \Lambda_{5,1}$). The toric degeneration named ``parallelogram" comes from the block diagonal matching field $B\Lambda_{1, 5}$. The ``$4-$gon" comes from the matching field $B \Lambda_{4,2}$ and the ``pentagon" comes from $B \Lambda_{2, 4}$. The other rows do not arise from block diagonal matching fields. 

\begin{Remark}We say that two matching fields $\Lambda$ and $\Lambda'$ are \emph{isomorphic} if there exists an element $S_k \times S_n$ sending one to the other.  
In Figure \ref{fig:tableofpolytopes}, the toric degenerations of $\Gr(3, 6)$ from the tropical hyperplane arrangements with bounded cells a parallelogram and a pentagon produce 
isomorphic toric varieties. Already from the table we see that the corresponding polytopes have the same $f$-vector. However, it can  be verified that the  matching fields are
not isomorphic. Therefore, the isomorphism type of the  toric variety of a matching field does not determine the matching field. 
Also, the toric degeneration coming from the diagonal matching field is isomorphic to the  one obtained from  the non-isomorphic $2$-block diagonal matching field  $B \Lambda_{3,3}$. 
\end{Remark}

\bibliographystyle{alpha}
\bibliography{Trop.bib}

\newcommand{\etalchar}[1]{$^{#1}$}
\begin{thebibliography}{BLMM17}

\bibitem[ACK18]{FvectorGC}
Byung~Hee An, Yunhyung Cho, and Jang~Soo Kim.
\newblock On the f-vectors of {G}elfand-{T}setlin polytopes.
\newblock {\em European Journal of Combinatorics}, 67:61--77, 2018.

\bibitem[BFF{\etalchar{+}}18]{Bossinger}
Lara Bossinger, Xin Fang, Ghislain Fourier, Milena Hering, and Martina Lanini.
\newblock Toric {D}egenerations of {G}r(2, n) and {G}r(3, 6) via {P}labic
  {G}raphs.
\newblock {\em Ann. Comb.}, 22(3):491--512, 2018.

\bibitem[BLMM17]{bossinger2017computing}
Lara Bossinger, Sara Lamboglia, Kalina Mincheva, and Fatemeh Mohammadi.
\newblock Computing toric degenerations of flag varieties.
\newblock In {\em Combinatorial Algebraic Geometry}, pages 247--281. Springer,
  2017.

\bibitem[DM14]{Cone}
Anton Dochtermann and Fatemeh Mohammadi.
\newblock Cellular resolutions from mapping cones.
\newblock {\em Journal of Combinatorial Theory, Series A}, 128:180--206, 2014.

\bibitem[DS04]{DevSturm}
Mike Develin and Bernd Sturmfels.
\newblock Tropical convexity.
\newblock {\em Documenta Mathematica}, 9:1--27, 2004.

\bibitem[Fei12]{Feigen}
Evgeny Feigin.
\newblock $\mathbb{G}_a^m$ degeneration of flag varieties.
\newblock {\em Selecta Mathematica}, 18(3):513--537, 2012.

\bibitem[FR15]{fink2015stiefel}
Alex Fink and Felipe Rinc{\'o}n.
\newblock Stiefel tropical linear spaces.
\newblock {\em Journal of Combinatorial Theory, Series A}, 135:291--331, 2015.

\bibitem[HJJS09]{herrmann2009draw}
Sven Herrmann, Anders Jensen, Michael Joswig, and Bernd Sturmfels.
\newblock How to draw tropical planes.
\newblock {\em The Electronic Journal of Combinatorics}, 16(2):R6, 2009.

\bibitem[KM96]{Kapovich}
Michael Kapovich and John~J. Millson.
\newblock The symplectic geometry of polygons in {E}uclidean space.
\newblock {\em J. Differential Geom.}, 44(3):479--513, 1996.

\bibitem[KM05]{KOGAN}
Mikhail Kogan and Ezra Miller.
\newblock Toric degeneration of schubert varieties and {G}elfand-{T}setlin
  polytopes.
\newblock {\em Advances in Mathematics}, 193(1):1--17, 2005.

\bibitem[KM16]{Kaveh}
Kiumars Kaveh and Christopher Manon.
\newblock Khovanskii bases, higher rank valuations and tropical geometry.
\newblock {\em arXiv preprint arXiv:1610.00298}, 2016.

\bibitem[LM14]{Mateusz}
Micha{\l} Laso{\'n} and Mateusz Micha{\l}ek.
\newblock On the toric ideal of a matroid.
\newblock {\em Advances in Mathematics}, 259:1--12, 2014.

\bibitem[MS05]{CCA}
Ezra Miller and Bernd Sturmfels.
\newblock {\em Combinatorial commutative algebra}, volume 227 of {\em Graduate
  Texts in Mathematics}.
\newblock Springer-Verlag, New York, 2005.

\bibitem[NNU10]{NISHINOU}
Takeo Nishinou, Yuichi Nohara, and Kazushi Ueda.
\newblock Toric degenerations of {G}elfand-{C}etlin systems and potential
  functions.
\newblock {\em Advances in Mathematics}, 224(2):648--706, 2010.

\bibitem[OH99]{Ohsugi}
Hidefumi Ohsugi and Takayuki Hibi.
\newblock Toric ideals generated by quadratic binomials.
\newblock {\em Journal of Algebra}, 218(2):509--527, 1999.

\bibitem[RS90]{robbiano1990subalgebra}
Lorenzo Robbiano and Moss Sweedler.
\newblock Subalgebra bases.
\newblock In {\em Commutative Algebra}, pages 61--87. Springer, 1990.

\bibitem[RW17]{RietschWilliams}
Konstanze Rietsch and Lauren Williams.
\newblock Newton-okounkov bodies, cluster duality, and mirror symmetry for
  {G}rassmannians.
\newblock {\em arXiv preprint arXiv:1712.00447}, 2017.

\bibitem[SS04]{SpeyerSturmfels}
David Speyer and Bernd Sturmfels.
\newblock The tropical {G}rassmannian.
\newblock {\em Advances in Geometry}, 4(3):389--411, 2004.

\bibitem[Stu96]{sturmfels1996grobner}
Bernd Sturmfels.
\newblock {\em Gr{\"o}bner bases and convex polytopes}, volume~8.
\newblock American Mathematical Soc., 1996.

\bibitem[SZ93]{sturmfels1993maximal}
Bernd Sturmfels and Andrei Zelevinsky.
\newblock Maximal minors and their leading terms.
\newblock {\em Advances in Mathematics}, 98(1):65--112, 1993.

\bibitem[Whi80]{White}
Neil~L White.
\newblock A unique exchange property for bases.
\newblock {\em Linear Algebra and its Applications}, 31:81--91, 1980.

\bibitem[Wit15]{Witaszek}
Jakub Witaszek.
\newblock The degeneration of the {G}rassmannian into a toric variety and the
  calculation of the eigenspaces of a torus action.
\newblock {\em Journal of Algebraic Statistics}, 6(1), 2015.

\end{thebibliography}

\bigskip \bigskip
\bigskip

\noindent
\footnotesize {\bf Authors' addresses:}

\bigskip 

\noindent Fatemeh Mohammadi, University of Bristol, 
BS8 1TW, Bristol, UK
\\ E-mail address: {\tt fatemeh.mohammadi@bristol.ac.uk}

\bigskip 

\noindent Kristin Shaw, University of Oslo, P.O. box 1053, Blindern, 0316 OSLO, Norway
\\ E-mail address: {\tt krisshaw@math.uio.no}
\end{document}